\newtheorem{theorem}{Theorem}[section]
\newtheorem{proposition}[theorem]{Proposition}
\theoremstyle{definition}
\newtheorem{remark}{Remark}
\numberwithin{equation}{section}
\def\O{{\Omega}}
\def\eps{{\varepsilon}}
\def\m{{\mathcal{M}}}
\def\M{{\mathcal{M}}}
\def\R{{\mathbb{R}}}
\def\N{{\mathbb{N}}}
\newenvironment{formula}[1]{\begin{equation}\label{eq:#1}}
                       {\end{equation}\noindent}
\def\Fi#1{\begin{formula}{#1}}
\def\Ff{\end{formula}\noindent}
\def\ds{\displaystyle}
\title{
Equilibrium and sensitivity analysis \\ of a spatio-temporal host-vector epidemic model
}
\author[$\dagger$,1]{Olivier Martin}
\author[1]{Yasmil Fernandez-Diclo}
\author[1]{J\'er\^ome Coville}
\author[1]{Samuel Soubeyrand}
\affil[1]{INRAE, BioSP, 84914 Avignon, France}
\affil[$\dagger$]{olivier.martin@inrae.fr}
\begin{document}

\maketitle

\begin{abstract}
Insect-borne diseases are diseases carried by insects affecting humans, animals or plants. 
They have the potential to generate massive outbreaks such as the Zika epidemic in 2015-2016 mostly distributed in the Americas, the Pacific and Southeast Asia, and the multi-foci outbreak caused by the bacterium {\it Xylella fastidiosa} in Europe in the 2010s. In this article, we propose and analyze the behavior of a spatially-explicit compartmental model adapted to pathosystems with fixed hosts and mobile vectors disseminating the disease. The behavior of this model based on a system of partial differential equations is complementarily characterized via a theoretical study of its equilibrium states and a numerical study of its transitive phase using global sensitivity analysis. The results are discussed in terms of implications concerning the surveillance and control of the disease over a medium-to-long temporal horizon.

\

\noindent{\bf Keywords.} Equilibrium analysis; Compartmental model; Global sensitivity analysis; Partial differential equations;  Transitive phase; Xylella fastidiosa.
\end{abstract}


\section{Introduction}

A large class of diseases are indirectly transmitted between hosts via insects, which play the role of vectors transporting the pathogens causing the diseases of interest from infectious hosts to susceptible hosts. For instance, malaria, Zika and dengue fever are transmitted by mosquitoes, Lyme disease by ticks, sharka by aphids, and Pierce's disease by xylem-feeding leafhoppers. For some of these examples, mathematical dynamic models have provided insights into how to improve disease control, potentially leading to disease eradication over large spatial territories and time periods; see e.g. \cite{reiner2013}. 

In this article, we are interested in a spatially-explicit compartmental model adapted to pathosystems with fixed hosts (typically, plants) and mobile vectors disseminating the disease. Compartmental models describe the dynamics of population fractions in specific disease states such as susceptible, exposed, infectious and recovered. They have been exploited to derive properties of idealized pathosystems \cite{diekmann2000,kermack1927,murray2003}, to search for efficient surveillance, control or eradication strategies \cite{kyrkou2018,rimbaud2019}, to infer epidemiological parameters, reconstruct past dynamics and predict disease propagation \cite{abboud2019,britton2014,soubeyrand2018}.  

The specification of the model considered in this article was partly driven by the case of {\it Xylella fastidiosa}, a bacterium which is pathogenic for a large range of plants and transmitted from infectious plants to susceptible plants via xylem-feeding leafhoppers \cite{purcell2013}. This plant pathogen was recently detected in southeastern France (in July 2015) and has the potential to spread beyond its current spatial distribution \cite{abboud2019,denance2017,godefroid2019,martinetti2019}. We built a model grounded on differential equations and explicitly handling both the host population and the vector population. This model will be used in further studies as a basis for estimating epidemiological parameters from surveillance data and assessing diverse control strategies, which may target the hosts, the vectors or both agents. However, to be able to properly interpret the output of these future analyses, we investigate in this article the properties of the above-mentioned model. We specifically aim to understand the impact of parameters on the behaviour of the model, in particular its equilibrium states, if any, and its transitive phase. 

In what follows, we present the model and derive its equilibrium states in Section \ref{sec:model}. The theoretical analysis of equilibrium states is made in two contexts: (i) when the vector population is considered as permanent, and (ii) when the vector population has a cyclic annual dynamics consisting of an emergence stage at the beginning of the year, a mortality stage at the end of the year and no adult-to-offspring transmission of the pathogen from one year to the following one. The latter context likely corresponds to the situation of vectors of {\it Xylella fastidiosa} in France. In Section \ref{sec:AS}, we numerically explore the impact of parameters on the transitive phase of the model by adapting tools of sensitivity analysis \cite{saltelli2000,saltelli2008} to the spatio-temporal framework that we deal with. Finally, we discuss implications of our results in Section \ref{sec:discu}.










\section{A vector-host epidemic model and its equilibrium states}\label{sec:model}

Partial differential equations are common tools for modeling biological invasions \cite{okubo1980,Shigesada1997}. Hereafter, we focus on the invasion of a pathogen in a population of fixed hosts (plants for example) that is transmitted by vectors (insects for example), and we propose compartmental models detailing the transmission process, which is at the core of any epidemiological model of infectious diseases. 

\subsection {A model $\mathcal{M}_1$ with coupled partial differential equations}

The following epidemic model is based on coupled partial differential equations
(PDEs), describing the interaction between the hosts
and the vectors. The PDE system, denoted $\mathcal{M}_1$, consists of two epidemiological sub-models indexed by time and space: a Susceptible-Exposed-Infected (SEI) model
for the hosts and a Susceptible-Infected (SI) model for the vectors. Let $S_h(t,x)$, $E_h(t,x)$ and $I_h(t,x)$ be the numbers of susceptible, exposed and infected hosts, respectively, at time $t>0$ and location $x \in \Omega$, where $\Omega\subset\R^d$ is the
studied spatial domain ($d\in\N^*$; typically $d=2$). Let $S_v(t,x)$ and $I_v(t,x)$ be the numbers of
susceptible and infected vectors. The PDE system is specified as follows:
\begin{align}
&\partial_t S_h(t,x)=-\beta_v(x)S_h(t,x)I_v(t,x)\label{eq1}\\
&\partial_t E_h(t,x)=\beta_v(x)S_h(t,x)I_v(t,x) -\eps E_h(t,x)\label{eq2}\\
&\partial_t I_h(t,x)=\eps E_h(t,x)\label{eq3}\\
&\partial_t S_v(t,x)= \Delta (D(x) S_v(t,x)) -\beta_h(x)S_v(t,x)I_h(t,x)\label{eq4}\\
&\partial_t I_v(t,x)= \Delta(D(x)I_v(t,x)) +\beta_h(x)S_v(t,x)I_h(t,x)\label{eq5}
\end{align}
with the following boundary and initial conditions:
\begin{align}
&\partial_n(D(x)S_v(t,x))=\partial_n(D(x)I_v(t,x))=0 \qquad \text{ for all } t>0, x \in \partial \O\label{eq-bc}\\
&(S_h(0,x),E_h(0,x),I_h(0,x), S_v(0,x),I_v(0,x))=(S_h^0(x),E_h^0(x),I_h^0(x), S_v^0(x),I_v^0(x)) \qquad \text{ for all }  x \in  \O, \label{eq-ci}
\end{align}
where $S_h^0$, $E_h^0$, $I_h^0$, $S_v^0$ and $I_v^0$ are spatial functions to be specified, parameter $\beta_v$ gives the contact rate (number of contacts per unit of time) of a vector with hosts, $\beta_h$ the contact rate  of a host with vectors, $D(x)$ is the coefficient of diffusion of vectors at location $x$, and $\epsilon^{-1}$ is the expected duration of the exposed (i.e. latency) period.

Note that by construction, there exists a constant $C^*>0$ and a spatial function $N$ such that for all times $t>0$:
\begin{align}
S_h(t,x)+E_h(t,x)+I_h(t,x)=S_h(0,x)+E_h(0,x)+I_h(0,x)=N(x)\label{invar1},\\
\int_{\O} (S_v(t,x)+I_v(t,x))\, dx= \int_{\O} (S_v(0,x)+I_v(0,x))\, dx= C^*. \label{invar2} 
\end{align}

\begin{remark}
These two invariant quantities are, respectively, the total number of hosts at $x$ ($N(x)$) and the total number of vectors in $\Omega$ $(C^*)$. 
\end{remark}

Note also that up to a redefinition of the function $S_v(t,x), I_v(t,x)$ and $\beta_v(x)$  by 
$s_v(t,x)=D(x)S(t,x), i_v(t,x)=D(x)I(t,x)$ and $\bar \beta_v(x)=\frac{\beta_v(x)}{D(x)}$, for $t>0$ and $x\in \O$ the system   \eqref{eq1}-\eqref{eq5}  can be reformulated as 

\begin{align}
&\partial_t S_h(t,x)=-\bar\beta_v(x)S_h(t,x)i_v(t,x)\label{eq1'}\\
&\partial_t E_h(t,x)=\bar\beta_v(x)S_h(t,x)i_v(t,x) -\eps E_h(t,x)\label{eq2'}\\
&\partial_t I_h(t,x)=\eps E_h(t,x)\label{eq3'}\\
&\partial_t s_v(t,x)= D(x)\Delta s_v(t,x) -\beta_h(x)s_v(t,x)I_h(t,x)\label{eq4'}\\
&\partial_t i_v(t,x)= D(x)\Delta i_v(t,x) +\beta_h(x)s_v(t,x)I_h(t,x)\label{eq5'}
\end{align}
with the following boundary and initial conditions:
\begin{align}
&\partial_n s_v(t,x)=\partial_n i_v(t,x)=0 \qquad \text{ for all } t>0, x \in \partial \O\label{eq-bc'}\\
&(S_h(0,x),E_h(0,x),I_h(0,x), s_v(0,x),i_v(0,x))=(S_h^0(x),E_h^0(x),I_h^0(x), \frac{S_v^0(x)}{D(x)},\frac{I_v^0(x)}{D(x)}) \qquad \text{ for all }  x \in  \O, \label{eq-ci'}
\end{align}

\subsection{A reduced version of the model $\mathcal{M}_1$}

By using Equation \eqref{invar1} and introducing the reduced variables $s_h=\frac{S_h}{N}$ and $i_h=\frac{I_h}{N}$, for $t>0$ and $x \in \O$ we can rewrite the model \eqref{eq1'}--\eqref{eq5'} in the following way:
\begin{align}
&\partial_t s_h(t,x)=-\bar\beta_v(x)s_h(t,x)i_v(t,x)\label{eq-red1}\\
&\partial_t i_h(t,x)=\eps (1-i_h(t,x)-s_h(t,x))\label{eq-red2}\\
&\partial_t s_v(t,x)= D(x) \Delta s_v(t,x) -\beta_h(x)N(x)s_v(t,x)i_h(t,x)\label{eq-red4}\\
&\partial_t i_v(t,x)= D(x)\Delta i_v(t,x) +\beta_h(x)N(x)s_v(t,x)i_h(t,x)\label{eq-red5}
\end{align}

Since $s_h$ satisfies \eqref{eq-red1}, by integrating with respect to time we get:
\begin{equation}\label{red1}
s_h(t,x)=s_h(0,x)\, \exp\left(-\bar \beta_v(x)\ds{\int_0^t i_v(\tau,x)\,d\tau}\right).
\end{equation}
Furthermore, by integrating \eqref{eq-red2}, we also deduce that: 
 $$
i_h(t,x)=(1-e^{-\eps t})+i_h(0,x)e^{-\eps t} -\eps \int_{0}^t e^{\eps(\tau-t)}s_h(\tau,x)\,d\tau.
$$
Thus, plugging \eqref{red1} in the above equation we end up with:
\begin{equation}\label{red2}
i_h(t,x)=(1-e^{-\eps t})+i_h(0,x)e^{-\eps t} -\eps s_h(0,x)\int_{0}^t e^{\eps(\tau-t)}\exp\left({-\bar\beta_v(x)\,\ds{\int_0^\tau i_v(\tau',x)\,d\tau'}}\right)\,d\tau,
\end{equation}
which in turn leads to the following coupled system of PDE (by plugging \eqref{red2} in \eqref{eq-red4} and \eqref{eq-red5}, and by using \eqref{invar1}):
\begin{multline}
\partial_t s_v(t,x) -D(x) \Delta  s_v(t,x)=\\-N(x)\beta_h(x)s_v(t,x)\left( 1-s_h(0,x)e^{-\eps t} -\eps s_h(0,x)\int_{0}^t e^{\eps(\tau-t)}\,e^{-\bar\beta_v(x)\,\ds{\int_0^\tau i_v(\tau',x)\,d\tau'}}\,d\tau\right)\label{eq-red6}       
\end{multline}
\begin{multline*}
\partial_t i_v(t,x) -D(x) \Delta i_v(t,x)=\\ N(x)\beta_h(x)s_v(t,x)\left( 1-s_h(0,x)e^{-\eps t} -\eps s_h(0,x)\int_{0}^t e^{\eps(\tau-t)}\,e^{-\bar\beta_v(x)\,\ds{\int_0^\tau i_v(\tau',x)\,d\tau'}}\,d\tau\right).   
\end{multline*}
Finally, by adding the two equations we can check that $s_v(t,x)+i_v(t,x)$ satisfies the following  
standard diffusion equation: 
  \begin{align}
&\partial_t a(t,x) -D(x) \Delta a(t,x)=0 \quad \forall t>0, x \in \O    \label{eq-a}   \\
&\partial_n a(t,x)=0 \quad \forall t>0, x \in \partial\O\label{eq-a-bc}\\
&a(0,x)=s_v(0,x)+i_v(0,x) \quad \forall x \in \O.  \label{eq-a-ci} 
\end{align}
Thus,  by introducing the following notation:
$$f(t,x,i_v(t,x)):= N(x)\beta_h(x)\left(1-s_h(0,x)e^{-\eps t} -\eps s_h(0,x)\int_{0}^t e^{\eps(\tau-t)}\,e^{-\bar\beta_v(x)\,\ds{\int_0^\tau i_v(\tau',x)\,d\tau'}}\,d\tau\right),$$
 we can further reduce the  system \eqref{eq1'}--\eqref{eq5'} to the following single equation:
\begin{equation}\label{ultim}
\partial_t i_v(t,x) -D(x) \Delta i_v(t,x)=(a(t,x)-i_v(t,x))f(t,x,i_v(t,x)), 
\end{equation} 
where the function $a$ is the solution of the diffusion-equation system \eqref{eq-a}--\eqref{eq-a-ci}.


\subsection{Analysis of the system $\mathcal{M}_1$}

We first observe that for a given positive pair $(N(x),C^*)$ (i.e. $N(x)\ge 0, C^*>0$), the system $\mathcal{M}_1$ has only two positive equilibria that satisfy the invariance conditions \eqref{invar1} and \eqref{invar2}. Moreover, one solution is globally unstable and the other one is globally stable. Namely, we have:
\begin{proposition}\label{prop1}
Let $\O\subset \R^d$ be a bounded smooth domain (at least $C^1$) and let $N\in C(\bar\O)$ be a positive function and $C^*$ a positive constant, let us also denote $|\O|_\mu$ the measure of $\O$ with respect to the positive measure $d\mu=\frac{dx}{D(x)}$. Then $(N(x),0,0,\frac{C^*}{|\O|_{\mu}D(x)},0)$ and $(0,0,N(x),0,\frac{C^*}{|\O|_{\mu}D(x)})$
 are the only non negative stationary solution of the system \eqref{eq1}--\eqref{eq5} satisfying the invariance conditions \eqref{invar1} and \eqref{invar2} and the boundary condition \eqref{eq-bc}. Moreover the stationary state $(N(x),0,0,\frac{C^*}{|\O|_\mu D(x)},0)$ is globally unstable whereas the state $(0,0,N(x),0,\frac{C^*}{|\O|_\mu D(x)})$ is globally stable.
\end{proposition}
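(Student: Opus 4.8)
The standing hypotheses I will use are that $\beta_v,\beta_h$ and $D$ are continuous and strictly positive on $\bar\O$, and that \eqref{eq1}--\eqref{eq5} is globally well posed with nonnegative solutions obeying the obvious bounds $0\le S_h,E_h,I_h\le N$ and $DS_v,DI_v\le\sup_{\bar\O}(DS_v^0+DI_v^0)$; all of this is elementary for this system. The argument then splits into three steps.

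\emph{Step 1 (the stationary states).} At a stationary solution, \eqref{eq3} forces $\eps E_h\equiv0$, so $E_h\equiv0$, and then \eqref{eq2} gives $\beta_v S_hI_v\equiv0$, i.e.\ $S_hI_v\equiv0$. Adding the stationary forms of \eqref{eq4} and \eqref{eq5} and using the no-flux condition \eqref{eq-bc}, $\Delta\big(D(S_v+I_v)\big)=0$ with vanishing normal derivative, so $D(S_v+I_v)$ is constant and equals $C^*/|\O|_\mu$ by \eqref{invar2}. Integrating the stationary \eqref{eq5} over $\O$ and using \eqref{eq-bc} removes the Laplacian and leaves $\int_\O\beta_hS_vI_h\,dx=0$; since $\beta_h>0$ this gives $S_vI_h\equiv0$, and plugging it back into \eqref{eq4} yields $\Delta(DS_v)=0$ with zero flux, so $DS_v$ and $DI_v$ are nonnegative constants with sum $C^*/|\O|_\mu$. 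A short dichotomy closes the argument. If $I_v\equiv0$, then $DS_v\equiv C^*/|\O|_\mu>0$ forces $I_h\equiv0$ (from $S_vI_h\equiv0$), hence $S_h=N-E_h-I_h\equiv N$, giving $\big(N,0,0,\tfrac{C^*}{|\O|_\mu D},0\big)$. If $I_v\not\equiv0$, then $I_v>0$ on $\O$, forcing $S_h\equiv0$ (from $S_hI_v\equiv0$) and $I_h\equiv N>0$, hence $DS_v\equiv0$ (from $S_vI_h\equiv0$) and $I_v\equiv\tfrac{C^*}{|\O|_\mu D}$, giving $\big(0,0,N,0,\tfrac{C^*}{|\O|_\mu D}\big)$. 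No other possibility occurs.

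\emph{Step 2 (convergence to the infected state and instability of the healthy one).} I would work on the scalar reduced equation \eqref{ultim}, $\partial_t i_v-D\Delta i_v=(a-i_v)f(t,x,i_v)$, using three observations: (a) by \eqref{eq-a}--\eqref{eq-a-ci} the function $a$ solves a Neumann diffusion equation with conserved mass $\int_\O a\,d\mu=C^*$, so $a(t,\cdot)\to C^*/|\O|_\mu$ uniformly; (b) bounding the integral in the definition of $f$ by $s_h(0,\cdot)(1-e^{-\eps t})$ shows $0\le f\le N\beta_h$; (c) if $J(t,x):=\int_0^t i_v(\tau,x)\,d\tau\to+\infty$ uniformly in $x$, then in $f$ both $s_h(0,\cdot)e^{-\eps t}$ and $\eps s_h(0,\cdot)\int_0^t e^{\eps(\tau-t)}e^{-\bar\beta_vJ(\tau,\cdot)}\,d\tau$ tend to $0$ (the latter because the kernel $e^{\eps(\tau-t)}$ concentrates near $\tau=t$, where $e^{-\bar\beta_vJ}$ is small), so $f(t,x,i_v(t,x))\to N(x)\beta_h(x)>0$ uniformly. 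Granting, from Step 3, that $J\to+\infty$ whenever infection is initially present, one reads off from \eqref{red1} that $s_h(t,\cdot)\to0$, hence $S_h=Ns_h\to0$; from \eqref{red2} that $i_h(t,\cdot)\to1$, hence $I_h=Ni_h\to N$ and $E_h=N-S_h-I_h\to0$; and finally, inserting (a) and (c) into \eqref{ultim} and comparing $i_v$ with the sub-solution $C^*/|\O|_\mu-\eta-Me^{-\lambda(t-T)}$ and the super-solution $C^*/|\O|_\mu+\eta+Me^{-\lambda(t-T)}$ (here $M$ is the uniform bound on $i_v$, $\lambda$ a positive lower bound for $f$ valid for large $t$, and $T=T(\eta)$ is large), one gets $i_v(t,\cdot)\to C^*/|\O|_\mu$, hence $I_v\to\tfrac{C^*}{|\O|_\mu D}$ and $S_v=(a-i_v)/D\to0$. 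Thus every solution carrying any initial infection converges to $\big(0,0,N,0,\tfrac{C^*}{|\O|_\mu D}\big)$, which is the global stability claim. In particular the only solutions that can approach $\big(N,0,0,\tfrac{C^*}{|\O|_\mu D},0\big)$ are those trapped in the infection-free manifold $\{(N,0,0,g,0):g\ge0,\ \int_\O g\,dx=C^*\}$, on which the evolution is just the heat flow for $g$; hence that equilibrium is unstable, with basin exactly this manifold.

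\emph{Step 3 (the infection does not die out — the crux).} It remains to prove that $(E_h^0,I_h^0,I_v^0)\not\equiv(0,0,0)$ implies $J(t,x)\to+\infty$ uniformly in $x$. First, $I_v$ eventually becomes positive: this is immediate if $I_v^0\not\equiv0$; if instead $E_h^0\not\equiv0$ or $I_h^0\not\equiv0$, then \eqref{eq3} together with $\partial_t I_h=\eps E_h\ge0$ makes $I_h(t,\cdot)$ positive on a set of positive measure for small $t>0$, and since $\int_\O S_v^0\,dx=C^*>0$ the strong maximum principle gives $S_v(t,\cdot)>0$ for $t>0$, so the source $\beta_hS_vI_h$ in \eqref{eq5} forces $I_v(t,\cdot)>0$. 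Hence $m(s):=\int_\O I_v(s,\cdot)\,dx>0$ for some $s>0$, and $m$ is nondecreasing. Now $v:=DI_v$ is a supersolution of $\partial_t v=D\Delta v$ with zero normal flux, so by Duhamel/comparison $v(t,\cdot)\ge e^{(t-s)A}v(s,\cdot)$, where $(e^{\sigma A})_{\sigma\ge0}$ is the positivity-preserving semigroup generated by $Ah=D\Delta h$ with $\partial_n h=0$; since $\O$ is bounded and connected and $D$ is uniformly elliptic, $e^{\sigma A}h\ge c_1\int_\O h\,d\mu$ for $h\ge0$ and $\sigma\ge1$ ($d\mu=dx/D$, $c_1>0$). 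Therefore $v(t,x)\ge c_1\int_\O v(s,\cdot)\,d\mu=c_1\int_\O I_v(s,\cdot)\,dx=c_1m(s)>0$ for all $t\ge s+1$ and all $x\in\O$, so $I_v(t,x)\ge c_1m(s)/\sup_{\bar\O}D>0$ uniformly and $J(t,x)\to+\infty$ uniformly, as required. This persistence estimate is the one genuinely delicate point — the coupling in \eqref{eq1}--\eqref{eq5} is not order-preserving and the attracting equilibrium is the fully infected one — whereas the heat-equation convergence in (a) and the sub/super-solution comparison in \eqref{ultim} are routine once it is in hand.
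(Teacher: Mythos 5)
Your Step 1 reproduces the paper's classification of the stationary states almost verbatim: integrate the stationary vector equations against the appropriate measure to force $\beta_h S_v^* I_h^*\equiv 0$, conclude that $DS_v^*$ and $DI_v^*$ are nonnegative constants summing to $C^*/|\O|_\mu$, and close with the dichotomy on $I_v^*$; this part is correct and matches the appendix. Where you genuinely diverge is the stability analysis. The paper's appendix proof is a linearization argument: it dismisses the disease-free state via the monotonicity of $S_h$ and $I_h$, writes the Jacobian at the endemic state, and invokes a negative spectral bound of the lower-right diffusion block plus monotonicity to \emph{infer} global stability --- a step that is left essentially unproved there. Your Steps 2--3 instead run a fully nonlinear argument on the reduced equation \eqref{ultim}: convergence of $a$ to its $\mu$-average, the uniform persistence bound $I_v(t,x)\ge c_1 m(s)/\sup_{\bar\O}D>0$ obtained from a Neumann heat-kernel lower bound for $D\Delta$, hence $J\to\infty$, $f\to N\beta_h$, and finally exponential decay of $s_v=a-i_v$ by comparison. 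This is much closer in spirit to the paper's own Proposition \ref{prop2} (same reduction, parabolic comparison, Harnack inequality), and it buys two things the appendix proof does not deliver: an actual proof that the infection cannot die out from any initial datum carrying infection (the crux your Step 3 correctly isolates), and a precise identification of the basin of the unstable equilibrium as the infection-free manifold, which is needed to interpret ``globally stable'' literally, since disease-free data never converge to the endemic state. The one load-bearing estimate to flag is the kernel bound $e^{\sigma A}h\ge c_1\int_\O h\,d\mu$ for $\sigma\ge 1$: it is standard for $D\Delta$ with Neumann conditions on a bounded connected $C^1$ domain with $0<\inf_{\bar\O} D\le\sup_{\bar\O} D<\infty$, but it deserves an explicit reference (Krylov--Safonov/Harnack up to the boundary, exactly as the paper uses in Proposition \ref{prop2}).
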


 The proof of this proposition uses rather standard elementary analysis, which can be found in the appendix section.
Next, we derive an important convergence property of the system. Namely, we show  the exponential convergence of the trajectories to its equilibria. 
\begin{proposition} \label{prop2}
Let $\O\subset \R^d$ be a bounded smooth domain and let $(S_h,E_h,I_h, S_v,I_v)$ be a solution of the system \eqref{eq1}--\eqref{eq5} with boundary  \eqref{eq-bc} and a smooth initial condition $(S_h^0(x),E_h^0(x),I_h^0(x), s_v^0(x),i_v^0(x))$. Then there exits positive constants $C$ and $\lambda$ such that    
\begin{itemize}
\item[i)] $\ds{S_h(t,x)\le Ce^{-\lambda t}}$, 
\item[ii)] $\ds{E_h(t,x)\le Ce^{-\lambda t}}$,
\item[iii)] $\ds{s_v(t,x)\le Ce^{-\lambda t}}$,
\item[iv)] $\ds{\|I_h-S_h^0-E_h^0-I_h^0\|_{\infty} \le Ce^{-\lambda t}}$,
\item[v)] $\ds{\left\|i_v-\frac{\int_{\O} s_v^0(x)+i_v^0(x)\,dx}{|\O|}\right\|_{2} \le Ce^{-\lambda t}}$.
\end{itemize}
\end{proposition}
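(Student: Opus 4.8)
The plan is to establish the five exponential-decay estimates in a cascade, exploiting the already-derived reductions and the invariance identities \eqref{invar1}--\eqref{invar2}. The backbone is the observation that $s_v$ is a subsolution of a linear parabolic problem with a nonnegative potential, which forces its decay, and that once $s_v$ decays all the other quantities follow by the explicit integral representations \eqref{red1}--\eqref{red2} together with Proposition \ref{prop1}.

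First I would treat $(iii)$. From \eqref{eq-red4} we have $\partial_t s_v - D(x)\Delta s_v = -\beta_h(x)N(x)\,i_h(t,x)\,s_v(t,x)$ with homogeneous Neumann conditions; since $i_h\ge 0$ (which follows from \eqref{eq-red2}, as $i_h$ is driven toward $1$ and stays in $[0,1]$) and $s_v\ge 0$, the function $s_v$ is a nonnegative subsolution of the heat equation $\partial_t w = D(x)\Delta w$. This already gives a uniform bound and, more importantly, the key is a lower bound on the ``effective killing rate'' $\beta_h(x)N(x) i_h(t,x)$ for large $t$: Proposition \ref{prop1} tells us the stable equilibrium has $i_h\equiv 1$, and the monotonicity of $i_h$ in \eqref{eq-red2} (combined with the fact that $s_h(t,x)\to 0$, which I would get from \eqref{red1} once I know $\int_0^\infty i_v\,d\tau = \infty$ on a set of positive measure, or more cleanly from a bootstrap) shows $i_h(t,x)\ge \delta>0$ for $t\ge T_0$ uniformly in $x$. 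Then on $[T_0,\infty)$, $s_v$ satisfies $\partial_t s_v - D\Delta s_v \le -c\, s_v$ with $c = \delta\inf_x \beta_h(x)N(x)>0$ (here positivity of $N$ on $\bar\O$ and smoothness of $\beta_h$ are used), and the comparison principle against $\|s_v(T_0,\cdot)\|_\infty e^{-c(t-T_0)}$ yields $(iii)$. This step is the main obstacle, because making the lower bound $i_h(t,x)\ge\delta$ rigorous and uniform in $x$ requires a careful argument: one must rule out regions of $\Omega$ where $i_v$ stays small for all time, which in turn requires using the diffusion in \eqref{eq-a} to show that $a(t,x)$ equilibrates to the positive constant $\frac{\int_\O (s_v^0+i_v^0)\,dx}{|\O|}$ and that $i_v$ cannot vanish everywhere (otherwise $s_h$ would never decay and \eqref{invar1} would be violated at equilibrium); I would likely argue by a Harnack-type inequality or a contradiction/compactness argument on time-translates.

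Next, $(i)$ and $(ii)$ follow quickly. For $(i)$: plug $(iii)$ into \eqref{red1}. Actually it is cleaner to use \eqref{eq-red1} directly — $\partial_t s_h = -\bar\beta_v(x) s_h i_v$ — but this needs a lower bound on $i_v$, not $s_v$; alternatively, from \eqref{eq1}, $\partial_t S_h = -\beta_v S_h I_v \le 0$ so $S_h$ is nonincreasing, and combined with $(v)$ (proved below, showing $i_v$ is bounded below by a positive constant for large $t$, since it converges in $L^2$ to a positive constant and one upgrades to a pointwise lower bound via parabolic regularity) we get $\partial_t S_h \le -c' S_h$ on $[T_1,\infty)$, hence $(i)$. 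Then $(ii)$: from \eqref{eq2}, $\partial_t E_h = \beta_v S_h I_v - \eps E_h \le C e^{-\lambda t}\|I_v\|_\infty - \eps E_h$; since the forcing decays exponentially (using $(i)$ and the uniform bound on $I_v$ coming from \eqref{invar2} and parabolic smoothing), Duhamel/Gronwall gives $E_h \le C e^{-\lambda' t}$ with $\lambda' = \min(\lambda,\eps)$ (or $\lambda'<\eps$ if $\lambda=\eps$, with a harmless polynomial factor absorbed by shrinking the rate), which is $(ii)$.

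Finally, $(iv)$ and $(v)$. For $(iv)$: from \eqref{eq3}, $\partial_t I_h = \eps E_h$, so $I_h(t,x) = I_h^0(x) + \eps\int_0^t E_h(\tau,x)\,d\tau$, and since by $(ii)$ the integrand is exponentially small, $I_h(t,x)$ converges, with $I_h(\infty,x) - I_h(t,x) = \eps\int_t^\infty E_h \le C e^{-\lambda t}$; the limit is identified via \eqref{invar1} as $N(x) = S_h^0+E_h^0+I_h^0$ (using $(i)$, $(ii)$ to see $S_h(\infty)=E_h(\infty)=0$), giving $(iv)$ — note the stated right-hand side $\|I_h - S_h^0 - E_h^0 - I_h^0\|_\infty$ is exactly $\|I_h - N\|_\infty$. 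For $(v)$: write $i_v = a - s_v$ (valid after the rescaling, since $a = s_v+i_v$ solves \eqref{eq-a}); by the spectral gap of the Neumann Laplacian with weight $1/D$, $a(t,\cdot)$ converges in $L^2$ exponentially to its average $\frac{1}{|\O|}\int_\O a(0,x)\,dx = \frac{1}{|\O|}\int_\O (s_v^0+i_v^0)\,dx$, and $s_v$ decays exponentially in $L^\infty\subset L^2$ by $(iii)$; combining the two, $(v)$ follows with $\lambda$ the minimum of the decay rates and $C$ adjusted accordingly. Throughout, one fixes a single $\lambda>0$ smaller than all the individual rates and a single constant $C$ at the end.
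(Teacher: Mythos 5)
Your cascade (iii) $\Rightarrow$ (v) $\Rightarrow$ (i) $\Rightarrow$ (ii) $\Rightarrow$ (iv) is the right overall architecture, and your treatments of (i), (ii), (iv) and (v) match the paper's in spirit: the paper also writes $i_v=a-s_v$, uses a spectral-gap/Poincar\'e--Wirtinger argument for $a$, bounds the factor $e^{\bar\beta_v\int_0^t s_v}$ via (iii), and obtains a pointwise positive lower bound on $a$ (via the Krylov--Safonov Harnack inequality together with mass conservation) to make $e^{-\bar\beta_v\int_0^t a}$ decay. The problem is step (iii), which you yourself identify as ``the main obstacle'' and then do not close. Your plan is to establish a spatially uniform lower bound $i_h(t,x)\ge\delta>0$ for $t\ge T_0$ and all $x\in\O$, and the sketch you give for it is circular: you propose to deduce it from $s_h\to0$, which requires $\int_0^\infty i_v\,d\tau=\infty$, which requires a positive lower bound on $i_v$ for large times, which you extract from (v) --- but your (v) is itself derived from (iii). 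Ruling out regions of $\O$ where $i_v$, and hence $i_h$, stays small for all time is essentially the content of the proposition, so it cannot serve as an input to its first step.

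The paper avoids this entirely with a bound that costs nothing: since $s_h$ is nonincreasing in time by \eqref{eq-red1}, the representation \eqref{red2} yields the \emph{time-independent} lower bound $f(t,x,i_v)\ge N(x)\beta_h(x)\,i_0(x)$ with $i_0=1-s_h(0,\cdot)$, a nonnegative function of $x$ alone determined by the initial data. Comparing $s_v$ with the solution of the linear problem with killing potential $i_0N\beta_h$ gives $s_v\le C e^{-\lambda_1 t}\varphi_1(x)$, where $\lambda_1$ is the principal Neumann eigenvalue of $-D(x)\Delta+i_0N\beta_h$; its positivity requires only $i_0\not\equiv0$, not a uniform-in-$x$ bound --- the diffusion transports the decay from the initially infected region to the whole domain through the eigenvalue. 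To salvage your route, replace ``uniform lower bound on $i_h$'' by ``nontrivial time-independent lower bound on the potential'' and invoke this eigenvalue; as written, (iii) is unproved and everything downstream of it is conditional. (A minor additional point: the conserved quantity for $\partial_t a=D(x)\Delta a$ with Neumann conditions is $\int_\O a\,\frac{dx}{D(x)}$, so the constant to which $a$ relaxes is the average with respect to $d\mu=dx/D(x)$, not the plain Lebesgue average you wrote.)
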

\begin{proof}
Observe that thanks to \eqref{red1}, \eqref{red2} and \eqref{invar1}, we can deduce  $ii)$ and $iv)$ from $i)$ and $v)$. Thus, we only have to prove $i)$, $iii)$ and $v)$. To prove such behaviour note that it is sufficient to  show that $iii)$ and $v)$ holds as well for the redefined function $s_v(t,x)$ and $i_v(t,x)$.
We will see  also that $v)$ is a consequence of $iii)$. Indeed,  let us look further at the properties of $f$ and let us recall that $N(x):=S_h(x)^0+E_h(x)^0+I_h^0(x)$. 

 We can easily check that for all $t\ge 0$ and $x$, we have
 $$N(x)\beta_h(x)i_0(x)\le f(t,x,i_v(t,x))\le N(x)\beta_h(x).$$
 Thus going back to  \eqref{eq-red6}, we deduce from the above inequality and a straightforward application of the parabolic maximum principle  that
 $$S_{m}(t,x)\le s_v(t,x)\le S_{M}(t,x), $$ 
 where $S_{m}$ and $S_{M}$ respectively satisfy:
 
  \begin{align}\label{esti1}
&\partial_t S_m(t,x) -D(x)\Delta S_m(t,x)=S_m(t,x)N(x)\beta_h(x)\quad \forall t>0, x \in \O\\
&\partial_n S_m(t,x)=0 \quad \forall t>0, x \in \partial\O\\
&S_m(0,x)=s_v^0(x) \quad \forall x \in \O   ,
\end{align}
 
and  
  \begin{align}\label{esti2}
&\partial_t S_M(t,x) -D(x)\Delta S_M(t,x)=S_M(t,x)i_0(x)N(x)\beta_h(x)\quad \forall t>0, x \in \O \\
&\partial_n S_M(t,x)=0 \quad \forall t>0, x \in \partial\O\\
&S_M(0,x)=s_v^0(x) \quad \forall x \in \O   .
\end{align}
 
From standard parabolic theory, we know that 
$$ S_M(t,x) \le C(s^0_v)e^{-\lambda_1t}\varphi_1(x), $$
with $(\lambda_1,\varphi_1)$ solution of the spectral problem
  \begin{align}\label{spec1}
&D(x)\Delta \varphi_1(x)-i_0(x)N(x)\beta_h(x)\varphi_1(x)+\lambda_1\varphi_1(x)=0 \quad \text{for} \quad x \in \O\\
&\partial_n \varphi_1(x)=0 \quad \forall  x \in \partial\O   .
\end{align}
 
Note that the exponential behaviour on $S_M$ implies that  $iii)$ holds. 

\begin{remark}
$\lambda_1$ can be expressed through some various equivalent variational formula. In particular, for the positive measure $d\mu(x)=\frac{dx}{D(x)}$ we have 
$$\lambda_1:=\inf_{\varphi \in H^1(\O)}\frac{\ds{\int_{\O}D(x)|\nabla \varphi(x)|^2\,d\mu(x) +\int_{\O} i_0(x)N(x)\beta_h(x)\varphi^2(x)\,d\mu(x)}}{\ds{\int_{\O} \varphi^2(x)\,d\mu(x)}}. $$  
From this variational formula,  we can clearly see the monotone dependence of $\lambda_1$ with respect to the parameter $i_0,N(x)$ and $\beta_h$ but  the dependence of $\lambda_1$ with respect to the diffusion $D(x)$ is still unclear since the measure $d\mu$ depends on $D(x)$. 
When $D(x)$ is a constant, then the above formulation can be simplified. Namely, 
$$\lambda_1=\inf_{\varphi \in H^1(\O)}\frac{\ds{D\int_{\O}|\nabla \varphi(x)|^2\, dx +\int_{\O} i_0(x)N(x)\beta_h(x)\varphi^2(x)\, dx}}{\ds{\int_{\O} \varphi^2(x)\,dx}}.$$
In this situation, we can clearly see the monotone dependence of $\lambda_1$ with respect to the parameter $D$.
\end{remark}

Now, on the one hand  from \eqref{invar2}, we deduce that 
$$\|i_v(t,x) -a(t,x)\|_{\infty}\le C(s^0_v)e^{-\lambda_1t}.$$
On the other hand   since $a$ satisfies the heat equation with homogeneous Neumann boundary condition, we can easily check that $v(t,x)=a(t,x)-\frac{1}{|\O|_\mu}\int_{\O}s_v(0,x)+i_v(0,x)\,d\mu(x)$, satisfies 
\begin{align}
&\partial_t v(t,x) -D(x)\Delta v(t,x)=0 \label{eq-v}\\
&\partial_n v(t,x)=0 \quad \text{ for all }\quad t>0, x\in \partial \O\\
&v(0,x)=s_v(0,x)+i_v(0,x)-\frac{1}{|\O|_{\mu}}\int_{\O}s_v(0,x)+i_v(0,x)\,d\mu(x)\\
&\int_{\O}v(t,x)\,d\mu(x)=0,\quad \text{ for all }\quad t>0 .
\end{align}

So by multiplying the equation \eqref{eq-v} by $v$ and integrating over $\O$ with respect to the measure $\mu$, we get, after integrating by part, 
$$ \partial_t \int_{\O}v^2(t,x)\,d\mu(x)= -2\int_{\O}D(x)|\nabla v(t,x)|^2\, d\mu(x)\le -2\min_{x\in \O}D(x)\int_{\O}|\nabla v(t,x)|^2\, d\mu(x),$$
which  by using a Poincare-Writtinger  inequality yields 
$$ \partial_t \int_{\O}v^2(t,x)\,d\mu(x)\le -2D_\text{min}\lambda_2\int_{\O}|v(t,x)|^2\, d\mu(x),$$
which after integration  in time enforces 
$$\int_{\O}v^2(t,x)\,d\mu(x)\le C(v^0)e^{-2D_\text{min}\lambda_2 t}, $$
where $D_\text{min}:=\min_{x\in \O} D(x)>0$.
Therefore, thanks to \eqref{invar2}
\begin{align*}
\left\|i_v-\frac{\int_{\O} s_v^0(x)+s_v^0(x)\,d\mu(x)}{|\O|_{\mu}}\right\|_{2} &\le \left\| a(t,x)-\frac{\int_{\O} s_v^0(x)+i_v^0(x)\,d\mu(x)}{ |\O|_{\mu}} \right\|_{2}+\|S_v(t,x)\|_{2} \\
& \le \|v\|_{2}+\|S_v(t,x)\|_{2}\le Ce^{-\lambda t}.
\end{align*}

At last, let us prove $i)$. By \eqref{red1} and using that $i_v(t,x)=a(t,x)-s_v(t,x)$ with $a$ defined by \eqref{eq-a}--\eqref{eq-a-ci}, we have 

$$S_h(t,x)=N(x)C(S_h^0)e^{-\bar \beta_v(x)\int_{0}^ti_v(s,x)\,ds}= N(x)C(S^0_h)e^{-\bar \beta_v(x)\int_{0}^ta(s,x)\,ds} e^{\bar \beta_v(x)\int_{0}^ts_v(s,x)\,ds}. $$
Let us estimate both exponential separately  and for simplicity  set the notation 
\begin{align}
&g(t,x):=e^{-\bar \beta_v(x)\int_{0}^ta(s,x)\,ds}\label{exp1}\\
&h(t,x):=e^{\bar \beta_v(x)\int_{0}^ts_v(s,x)\,ds} \label{exp2}
\end{align}

First let us observe that $iii)$ yields a straightforward bounded estimate on $h$ (i.e \eqref{exp2}). Indeed, thanks to $iii)$ we get 
$$
 h(t,x)=e^{\bar \beta_v(x)\int_{0}^ts_v(s,x)\,ds}\le e^{\frac{C(S_v^0)\bar \beta_v(x)}{\lambda_1}[1-e^{-\lambda_1 t}]}. $$
 
Next let us estimate the function $g(t,x)$ (i.e \eqref{exp1}).  Recalling that  $a$ is a positive solution of the heat equation with Neumann boundary condition, we can use  the Krylov-Safonov Harnack inequality up to the boundary (see \cite{Evans1998}) and  for all $\tau>0$ there exists $C(\tau)>0$ such that for all $t>0$ and $x\in \O$
$$\max_{x\in \O} a(t,x)\le C(\tau) \min_{\O} a(t+\tau,x).  $$

From there, thanks to the mass invariance of $a$ with respect to the measure $d\mu(x)$, we get 

$$\frac{1}{|\O|_{\mu}}\int_{\O}[s_v^0(x)+i_v^0(x)]\,d\mu(x)=\frac{1}{|\O|_{\mu}}\int_{\O}a(t,x)\,d\mu(x)\le \max_{x\in \O} a(t,x)\le C(\tau) a(t+\tau,x), $$
which thanks to the definition of $g$  yields   
\begin{align*}
g(t,x):=e^{-\bar \beta_v(x)\int_{0}^ta(s,x)\,ds}&\le  e^{-\bar \beta_v(x)\int_{0}^{t-\tau} a(s+\tau,x)\,ds}\\
&\le e^{-\frac{\bar \beta_v(x) C^*}{C(\tau)|\O|_{\mu}}(t-\tau)}.
\end{align*}
Hence, we get 
$$S_h(t,x)\le N(x)C(S_h^0)e^{\frac{C(S_v^0)\bar \beta_v(x)}{\lambda_1}[1-e^{-\lambda_1 t}]}e^{-\frac{\bar \beta_v(x) C^*}{C(\tau)|\O|_{\mu}}(t-\tau)}. $$

\end{proof}

\subsection{A multi-annual model $\mathcal{M}_2$ with periodic vector emergence and death} \label{sec:multi-annual.model}

Suppose that, within the life cycle of the vector, the pathogen is not transmitted to the offspring. Then, a more realistic model of the pathogen dynamics can be achieved by including a pulse-like component where the vector are reset at some specific periodic time.
This framework developed in \cite{Mailleret2009} translates in the
above model $\mathcal M_1$ as follows:

For all $n\in\N$,  $ nT<t<(n+1)T$ and $x \in  \O$, the quantity $(S_h(t,x,n+1),E_h(t,x,n+1),I_h(t,x,n+1), S_v(t,x,n+1),I_v(t,x,n+1)) $ is assumed to satisfy:
\begin{align}
&\partial_t S_h(t,x,n+1)=-\beta_v(x)S_h(t,x,n+1)I_v(t,x,n+1)\label{eq-imp1}\\
&\partial_t E_h(t,x,n+1)=\beta_v(x)S_h(t,x,n+1)I_v(t,x,n+1) -\eps E(t,x,n+1)\label{eq-imp2}\\
&\partial_t I_h(t,x,n+1)=\eps E_h(t,x,n+1)\label{eq-imp3}\\
&\partial_t S_v(t,x,n+1)= \Delta(D(x)S_v(t,x,n+1)) -\beta_h(x)S_v(t,x,n+1)I_h(t,x,n+1)\label{eq-imp4}\\
&\partial_t I_v(t,x,n+1)= \Delta(D(x)I_v(t,x,n+1)) +\beta_h(x)S_v(t,x,n+1)I_h(t,x,n+1), \label{eq-imp5}
\end{align}
with the boundary conditions 
\begin{equation}
\label{eq-imp-bc}
\partial_n(D(x)S_v(t,x,n+1))=\partial_n(D(x)I_v(t,x,n+1))=0 \qquad \text{ for all } nT<t<(n+1)T, x \in \partial \O.
\end{equation}
Next we have to describe the impulsive condition:
for all $x \in \O$,
\begin{equation} \label{eq-imp-compat}
\begin{cases}
&S_h(nT,x,n+1)=S_h((n+1)T^-,x,n),\\
&E_h(nT,x,n+1)=E_h((n+1)T^-,x,n),\\
&I_h(nT,x,n+1)=I_h((n+1)T^-,x,n),\\
&S_v(nT,x,n+1)= S_v^0(x),\\
&I_v(nT,x,n+1))=0.
\end{cases} 
\end{equation}
Note that the impulsive condition implies the time continuity of $(S_h,E,I_h)$. Moreover, for all times $t>0$ we have:
\begin{align}\label{invar3}
&S_h(t,x)+E(t,x)+I_h(t,x)=S_h(0,x)+E(0,x)+I_h(0,x)=N(x),\\
&\int_{\O}(S_v(t,x)+I_v(t,x))\, dx = \int_{\O}(S_v(0,x)+I_v(0,x))\, dx=C^*.\label{invar4}
\end{align}

Like for model $\mathcal{M}_1$, we may wonder if this system admits one or more  non-negative equilibria. Note that due to the impulsive nature of the system, an equilibrium of $\m_2$  is then  a non negative time periodic  function of period $T$ that solves the system of equation \eqref{eq-imp1}--\eqref{eq-imp5} with boundary condition \eqref{eq-imp-bc}. We can check that the stationary solution $(N(x),0,0,\frac{C^*}{D(x)|\O|_{\mu}},0)$ of the system $\mathcal{M}_1$ is also an equilibrium of $\M_2$ which remains globally unstable. In contrast, the stationary solution $(0,0,N(x),0,\frac{C^*}{D(x)|\O|_{\mu}})$ for $\mathcal M_1$ is not a solution for $\M_2$, since it does not include the impulsive term. We  may still wonder if other  equilibria exist. 
In this aim, we can show:

\begin{proposition}\label{prop3}
Let $\O\subset \R^d$ be a bounded smooth domain (at least $C^1$), let $N\in C(\bar\O)$ and $S_v^0$ be two non-negative functions, and let
 $(S_v^*(t,x),I_v^*(t,x))$ be the solution of
 \begin{align*}
 &\partial_t S_v^*(t,x)= \Delta(D(x) S_v^*(t,x)) -\beta_h(x)S_v^*(t,x)N(x)&\qquad \text{ for all } 0<t<T, x \in  \O \\
&\partial_t I_v^*(t,x)= \Delta(D(x) I_v^*(t,x)) +\beta_h(x)S_v^*(t,x)N(x)&\qquad \text{ for all }  0<t<T, x \in \ \O\\
&S_{v}^*(0,x)=S^0_v(x), \quad I_{v}^*(0,x)=0&\qquad \text{ for all }  x \in  \O\\
&\partial_n(D(x)S_v^*(t,x))=\partial_n(D(x)I_v^*(t,x))=0 &\qquad \text{ for all } 0<t<T, x \in \partial \O.
\end{align*}
 Then,  the state $(0,0,N(x),S^*(t,x),I^*(t,x))$ is the only non-negative equilibrium of the impulsive system \eqref{eq-imp1}--\eqref{eq-imp5} with the impulsive condition \eqref{eq-imp-compat} satisfying the invariance conditions \eqref{invar3} and \eqref{invar4} and the boundary condition \eqref{eq-imp-bc}. 
\end{proposition}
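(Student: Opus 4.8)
The plan is to characterise the periodic orbits of $\mathcal M_2$ by exploiting the same structure that reduced $\mathcal M_1$ to the scalar equation \eqref{ultim}, and then to show that a $T$-periodic solution is forced to have $I_h\equiv N$ (equivalently $s_h\equiv 0$) on the host side, which in turn pins down the vector components uniquely. First I would fix a putative equilibrium, i.e. a nonnegative solution of \eqref{eq-imp1}--\eqref{eq-imp5} with \eqref{eq-imp-compat} that is $T$-periodic in $t$. The impulsive condition \eqref{eq-imp-compat} forces $S_h$, $E_h$, $I_h$ to be genuinely continuous across each $nT$, so periodicity means $S_h(0,x)=S_h(T^-,x)$, $E_h(0,x)=E_h(T^-,x)$, $I_h(0,x)=I_h(T^-,x)$ for all $x$, together with the constraint $S_v(0,x)=S_v^0(x)$, $I_v(0,x)=0$ coming from the reset. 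On one cycle $0<t<T$ the dynamics are exactly those of $\mathcal M_1$, so formula \eqref{red1} gives $S_h(t,x)=S_h(0,x)\exp(-\bar\beta_v(x)\int_0^t i_v(\tau,x)\,d\tau)$ with $i_v=I_v/D$ (after the rescaling of the excerpt).

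Next I would derive the key monotonicity. From \eqref{eq3} (or \eqref{eq-red2}) we have $\partial_t I_h=\eps E_h\ge 0$, so $I_h(\cdot,x)$ is nondecreasing on $[0,T]$; combined with continuity and periodicity this forces $E_h\equiv 0$, hence $\partial_t I_h\equiv 0$, hence $\partial_t E_h\equiv 0$ in \eqref{eq2}, which gives $\beta_v(x)S_h I_v\equiv 0$. Similarly $\partial_t S_h=-\beta_v(x)S_h I_v\le 0$, so $S_h(\cdot,x)$ is nonincreasing and periodicity forces it to be constant; thus $S_h(t,x)\equiv S_h(0,x)$. Now I must eliminate the branch $S_h(0,x)>0$ with $I_v\equiv 0$. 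If $I_v\equiv 0$ on a cycle then by \eqref{eq5} $\beta_h(x)S_v I_h\equiv 0$; but then $S_v$ solves the pure diffusion equation with reset $S_v(0,x)=S_v^0(x)$, so after one period $S_v(T^-,x)$ is the heat semigroup applied to $S_v^0$, which by the strong maximum principle is strictly positive somewhere it was zero / strictly averages — more to the point, unless $S_v^0$ is already the (unique, up to the $d\mu$-normalisation) stationary profile $\frac{C^*}{D(x)|\O|_\mu}$, one does not get $I_v\equiv0$ consistently, and if $I_h\not\equiv 0$ then $\beta_h S_v I_h\not\equiv 0$ forces $I_v$ to become positive, a contradiction with periodicity unless $I_h\equiv 0$ which then forces $S_h\equiv N$, i.e. the known unstable equilibrium $(N,0,0,\tfrac{C^*}{D|\O|_\mu},0)$ — which is excluded because we seek the \emph{new} one and because its $S_v$-reset is incompatible unless $S_v^0=\tfrac{C^*}{D|\O|_\mu}$. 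The clean statement to prove is therefore: any periodic solution with $I_v\not\equiv 0$ on some cycle must have $S_h\equiv 0$, hence $I_h\equiv N$ by \eqref{invar3} and $E_h\equiv 0$. I would establish this by noting that once $i_v(\tau,x)>0$ on a positive-measure set of times, formula \eqref{red1} makes $S_h(T^-,x)<S_h(0,x)$ at those $x$ unless $S_h(0,x)=0$ there; propagating this with the irreducibility of the vector diffusion (strong maximum principle: $I_v$ cannot vanish identically on any cycle once $I_h\not\equiv0$ anywhere) closes the dichotomy.

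Once $S_h\equiv 0$, $E_h\equiv 0$, $I_h\equiv N(x)$ are fixed, equations \eqref{eq-imp4}--\eqref{eq-imp5} on a cycle become exactly the linear system defining $(S_v^*,I_v^*)$ in the statement, with the reset $S_v(0,x)=S_v^0(x)$, $I_v(0,x)=0$; by uniqueness for that linear parabolic system, $(S_v,I_v)\equiv(S_v^*,I_v^*)$ on $[0,T]$, and periodicity is automatic because the next cycle starts from the same reset. Hence the only such equilibrium is $(0,0,N,S_v^*,I_v^*)$. The main obstacle I anticipate is the rigorous exclusion of the trivial branch and, relatedly, making precise that a periodic solution with $I_h\not\equiv 0$ somewhere cannot keep $I_v\equiv 0$: this is where one genuinely needs the strong maximum principle / Harnack inequality for the (rescaled, divergence-form) diffusion operator together with the source term $\beta_h(x)s_v I_h\ge 0$ in \eqref{eq5'}, to argue that $I_v(t,\cdot)>0$ for $t>0$ on a cycle and therefore cannot return to $0$ at $t=T^-$, forcing $I_h\equiv 0$ and thence $S_h\equiv N$, which is the already-known unstable state and not a new equilibrium — so the genuinely new equilibrium is the one claimed. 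Everything else (the reductions via \eqref{red1}, the monotonicity of $S_h$ and $I_h$, uniqueness for the linear vector system) is routine.
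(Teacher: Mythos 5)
Your proposal follows essentially the same route as the paper's proof: monotonicity of $S_h$ and $I_h$ in time combined with $T$-periodicity forces the host compartments to be time-independent, with $E_h\equiv 0$ and $\beta_v S_h I_v\equiv 0$; positivity of $I_v$ on a cycle (strong maximum principle applied to \eqref{eq-imp5}) then forces $S_h\equiv 0$, hence $I_h\equiv N$ by \eqref{invar3}; and the vector pair is finally identified with $(S_v^*,I_v^*)$ by uniqueness for the linear parabolic system started from the reset data \eqref{eq-imp-compat}. The one place you genuinely diverge is the extended attempt to rule out the branch $I_v\equiv 0$, and there your argument contains an error: the impulsive condition resets $S_v$ to $S_v^0$ at every $nT$, so the disease-free state $(N,0,0,\,\cdot\,,0)$, with $S_v$ equal to the heat flow of $S_v^0$ on each cycle, is a legitimate $T$-periodic solution of the impulsive system for \emph{any} $S_v^0$ --- not only when $S_v^0$ coincides with the stationary profile $C^*/(D(x)|\O|_\mu)$ --- so reset-incompatibility cannot exclude it. The paper does not exclude it either: it acknowledges just before the proposition that the disease-free state is an equilibrium of $\M_2$, and its proof silently restricts to \emph{positive} equilibria (equivalently $I_h\not\equiv 0$), under which the maximum-principle step you also invoke makes $I_v$ strictly positive for $t>0$ and closes the dichotomy. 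Your core argument is therefore sound and matches the paper's; simply replace the reset-incompatibility claim by the observation that the disease-free branch is the already-known (unstable) equilibrium, set aside by the implicit positivity assumption in the statement.
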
   

Like for the analysis of the equilibrium for the system $\M_1$, this proposition is rather standard and its proof is provided in the appendix.

\subsection{Comment on the diffusion specification}

In the models considered above, we have assumed that the diffusion operator that describes the dispersion process of the vector population reflects, at the macroscopic level, an unbiased random walk in a spatial heterogeneous medium; see for example \cite{Turchin1998} for a standard derivation. Other formulations are possible depending on the reality of the studied phenomenon and the choice of the modeler. 
In general, a new formulation impacts the equilibrium analysis. However,  results presented above holds true, up to minor changes, for one of the formulations commonly used to describe the diffusion of a population and grounded on flux consideration combined with some conservation laws such as Fick's law.
This formulation can be written as follows:
\begin{align*}
&\partial_t S_h(t,x)=-\beta_v(x)S_h(t,x)I_v(t,x)\\
&\partial_t E_h(t,x)=\beta_v(x)S_h(t,x)I_v(t,x) -\eps E_h(t,x)\\
&\partial_t I_h(t,x)=\eps E_h(t,x)\\
&\partial_t S_v(t,x)= \nabla \cdot(D(x) \nabla S_v(t,x)) -\beta_h(x)S_v(t,x)I_h(t,x)\\
&\partial_t I_v(t,x)= \nabla\cdot(D(x)\nabla I_v(t,x)) +\beta_h(x)S_v(t,x)I_h(t,x),
\end{align*}
with the following boundary and initial conditions:
\begin{align*}
&\partial_n(S_v(t,x))=\partial_n(I_v(t,x))=0 \qquad \text{ for all } t>0, x \in \partial \O\\
&(S_h(0,x),E_h(0,x),I_h(0,x), S_v(0,x),I_v(0,x))=(S_h^0(x),E_h^0(x),I_h^0(x), S_v^0(x),I_v^0(x)) \qquad \text{ for all }  x \in  \O.
\end{align*}

We can easily check that the above analysis does not significantly change with this model. Namely, all the proofs can be adapted to this model with minor changes. In particular, the proof of the exponential convergence to the equilibrium can be transposed readily since it is only based on fundamental properties of elliptic and parabolic equations that are satisfied by the new model. 

\section{Numerical study of the transitive phase}\label{sec:AS}

As a complement to the preceding study about equilibrium states, we implement in this section a global sensitivity analysis (GSA) to investigate how input parameters influence the variability of the transitive phase of the dynamics of infected hosts and infected vectors. 
For this analysis of the transitive phase, we performed numerous simulations of the multi-annual model $\mathcal M_2$ of Section \ref{sec:multi-annual.model} with spatially constant parameters $\beta_v$, $\beta_h$ and $D$. 
These simulations were specified by using the dynamics of {\it Xylella fastidosia} in southeastern mainland France as an inspiring example. {\it Xylella fastidosia} is a phytopathogenic bacterium infecting a large class of plant species and vectored by multiple insects, including {\it Philaenus spumarius} that is present in France. This bacterium was detected in 2015 in Corsica island, France, and in southeastern mainland France \cite{soubeyrand2018}. In August 2019, a new strain in France, called {\it pauca}, was collected and identified from an olive tree near the Italian border. Thus, inspired by the occurrence of this new strain and its potential spread, we specified the simulations of model $\mathcal M_2$ such as (i) the initial condition corresponds to an introduction in southeastern France near the Italian border, and (ii) the eventual spread of the pathogen occurs in the spatial domain formed by the French departments close to the Mediterranean sea where the conditions are relatively favorable for {\it Xylella fastidosia} expansion \cite{godefroid2019,martinetti2019}. 
Figure \ref{map-omega-intro} shows the study domain $\Omega$ and the introduction point used for all the simulations.



 \begin{figure}[!ht]
 \begin{center}
\includegraphics[width=8cm,trim={2.5cm 2cm 2.5cm 2cm},clip]{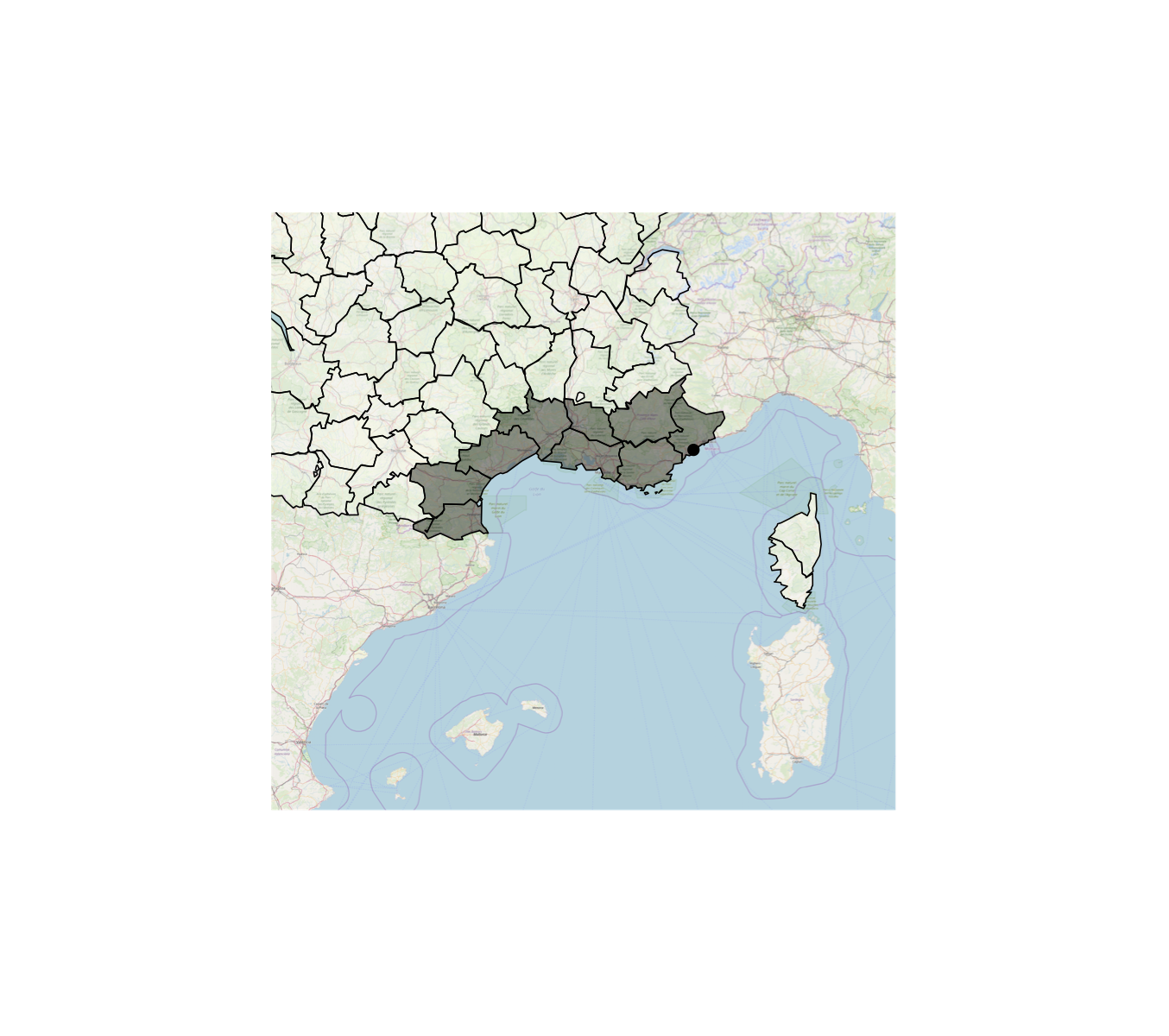}\\
\caption{Map of $\Omega$ (union of shaded French departments) and location of the introduction point (black dot) in southeastern France, near the Italian border, used for the simulation study. The borderline of $\Omega$ was regularized for the resolution of the system of equations.}
\label{map-omega-intro}
\end{center}
\end{figure}

\subsection{Two-stage resolution of the coupled partial differential equations}

Given the non linearity of the reaction terms in the modeling of the
pathogen spread, these terms were separated from the
diffusion terms in the resolution of the system of equations, using the operator splitting method. Thus, at a given time step, the equation system was resolved in two stages. The first stage concerning the reaction part of the model was implemented with the Newton-Raphson method. Since
vector diffusion is not accounted for in this first
stage, the partial differential equation system is simply an
ordinary differential equation system. In the second stage, the results from the first stage were used as initial conditions and the diffusion part of the model was handled with the finite element method. Computations have been performed with the \texttt{FreeFem++} software \cite{freefem}.

\subsection{Sensitivity analysis: methods}

As explained above, we used GSA to have a better understanding
of the evolution of the disease dynamics from its introduction and before it reaches its equilibrium state, in both the host and the vector compartments.
The objective of sensitivity analysis, in general, is to determine how variation in the model output depends upon the input information fed into the model. This way of reasoning results from the fact that input information, typically the values of model parameters, are generally uncertain.  
In GSA, one attempts to highlight a hierarchy between the uncertainty in the input factors or parameters with respect to the uncertainty of model outputs \cite{saltelli2000} (thereafter, the term {\it parameters} designate both factors and parameters). The deficit of knowledge on input parameters is described by uniform probability laws defined, without loss of generality, over $[0,1]$ for each parameter. Note that we assume the independence of parameter uncertainties. Let $\mathcal{D} = [0,1]^K$ denote the domain of uncertainty of the parameters, where $K$ is the number of parameters. 

We used a GSA method based on {\it variance decomposition} called ANOVA. Let $\phi:{\bf p} = (p_1,..., p_K) \in {\mathcal D}\to \phi({\bf p}) \in\R $. The quantity $\phi({\bf p})$ is a model output (e.g. the density of infected hosts at a given location and a given time) when the vector of parameters takes the value ${\bf p}$. The uncertainty of the model output (resulting from the uncertainty of the parameters) is defined  by its variance $V(\phi)$ satisfying $V(\phi) = \int_{\mathcal{D}} (\phi ({\bf p}) - \bar{\phi})^2 d{\bf p} $, where $\bar\phi=\int_{\mathcal{D}} \phi ({\bf p})\, d{\bf p}$ is the mean  of $\phi$. For $u \subseteq \mathcal{I}=\{1,...,K\}$, we denote $u^c$ the complement of $u$ in $\mathcal{I}$
and $\mathcal{P}(  \mathcal{I})$ the power set of $\mathcal I$ (i.e., the set of all the subsets of $\mathcal I$ including the empty set $\varnothing$). The {\it variance decomposition} of $\phi$ is defined by (see \cite{owen2013}): 
\begin{equation}
 \phi ({\bf p}) = \displaystyle  \sum_{u \in \mathcal{P}(  \mathcal{I})} \phi_u ({\bf p}), \label{decomp}
\end{equation}
where $\phi_u$ is defined by:
\begin{equation}
 \phi_u({\bf p})=\displaystyle \int \phi({\bf p}) d{\bf p}_{u^c} - \displaystyle \sum_{v \in \mathcal{P}(u),v\neq u}\phi_v({\bf p})
 \end{equation}
 with $d{\bf p}_u=\displaystyle \prod_{i \in u} dp_i$ according to the independence hypothesis on parameter uncertainties.

It follows that $\bar{\phi}= \phi_{\varnothing}({\bf p})$. When $u=\{i\}$, one obtains the main effect of parameter $p_i$, which is 
$$ \phi_i({\bf p})=\int \phi({\bf p}) d{\bf p}_{i^c} - \bar{\phi}$$ 
that depends on $p_i$.  When $u=\{i,j\}$, one obtains the interaction of order two of $p_i$ and $p_j$, which is  
$$\phi_{i,j}({\bf p})=\int \phi({\bf p}) d{{\bf p}}_{\{i,j\}^c}- \phi_i({\bf p}) -\phi_j({\bf p}) -\bar{\phi}$$
that depends on $p_i$ and $p_j$. Moreover, we have the properties  
$\int_{\mathcal{D}} \phi_{u}({\bf{p}}) \, d{\bf p} =0 $ for $u\neq\varnothing$ and $\int_{\mathcal{D}} \phi_{u}({\bf p}) \phi_{v}({\bf{p}})\,d{\bf p}=0$ for $u \neq v$. It follows that 
$$ V(\phi)=\displaystyle \sum_{u\in \mathcal{P}(  \mathcal{I})} V(\phi_u). $$ 
The principal sensitivity index ($\text{PI}_i$) of parameter $p_i$ is defined by: 
 $$\text{PI}_i = \frac{V(\phi_i)}{V(\phi)} \in [0,1]. $$
The total sensitivity index  ($\text{TI}_i$) of parameter $p_i$ is defined by
 $$\text{TI}_i=  \displaystyle \Big( \displaystyle \sum_{v \; \cap \; \{i\} \neq \varnothing }  V(\phi_{v \; \cap \;\{i\}}) \Big) /{V(\phi)} \in [0,1].$$
These indices satisfy the property $\text{TI}_i \ge \text{PI}_i$. A large value of $\text{TI}_i$ with respect to $\text{PI}_i$ indicates the presence of interaction (of any order) between $x_i$ and other parameters.
 
The numerical challenge of GSA is to compute these sensitivity indices (SIs) with Monte-Carlo simulations. This challenge was tackled by using the latin hypercube square method for sampling parameters and following the approach proposed by Monod {\it et al.} \cite{Monod2006} for computing the sensitivity indices. This approach requires $M \times (2 K + 1)$ evaluations of the model with an initial sampling scheme of $M$ different points in the parameter domain $\mathcal D$ (of dimension $K=3$), and we used $M=300$ in the application. Thus, the model was run 2100 times over a period of 100 years.
In practice, the sampling method has been implemented by using the package \verb"lhs" \cite{lhspackage} and SIs have been computed by using the function \verb"sobolEff()" from the package \verb"sensitivity" \cite{sensitivitypackage} in the \verb"R" environment programming software \cite{Rsoftware}. 
Variation ranges of $\beta_h$, $\beta_v$ and $D$ are given in Table \ref{tabIntervals}. In addition, we set $\eps=0.02$, $S_v^0\equiv 300$, $I_v^0\equiv 0$, $S_h^0\equiv 300$, $E_h^0 \equiv 0$ and $I_h^0 \equiv 0$.

\begin{table}
\begin{center}
\begin{tabular}{cc}
\hline
Parameter & Variation range \\
\hline
$\beta_v$ & [5,25] \\
$\beta_h$ & [5,25]\\
$D$ & [5,15000] \\
\hline
\end{tabular}
\caption{Variation ranges of input parameters for the global sensitivity analysis of model $\mathcal{M}_2$. }
\label{tabIntervals}
\end{center}
\end{table}

\subsection{Sensitivity analysis: results about infected hosts and vectors}

Figures~\ref{meanvarHosts} and \ref{meanvarVectors} in Supplementary Material map the mean and standard deviation (std.) of the numbers of infected hosts and infected vectors, respectively, computed from the 2100 simulations. Mean and std. have been computed at 600 points in the domain $\O$, and a smoothing procedure has been applied to build maps; see figure captions. These figures illustrate the convergence to an equilibrium, in which all hosts and vectors are infected. Equilibrium is almost achieved at $t=50$ years for infected vectors, whereas infected hosts approach equilibrium beyond $t=100$ years (the value of the plateau, namely 300, is not yet reached at $t=100$; see Figure \ref{MeanVarHoststempslongs}). Standard deviations for infected hosts and infected vectors follow similar evolution. In the area surrounding the site of introduction, std. is relatively large at the beginning of the epidemic and then decreases with time. In further areas from the site of introduction, std. is low at the beginning, then increases progressively and finally decreases. We can further note that the peak of std. is higher for infected vectors than for infected hosts, and std. is more uniform for infected hosts than for infected vectors, especially over the first 50 years. 
Thus, the propagation of the disease in the host and vector compartments clearly differ and the results of the GSA below will highlight the main drivers of the propagation variability.


The GSA was performed to assess the influence across space and time of three input parameters $\beta_h$, $\beta_v$ and $D$, related to disease transmission and diffusion, on two outcome variables: the infected hosts $I_h(t,x)$ and the infected vectors $I_v(t,x)$. The initial conditions and the parameter $\eps$ were not included in this analysis. We can however point out that, based on a preliminary study not shown here, $\eps$ (related to the latency period in the host) has a strong effect obscuring the effects of the other parameters as soon as its range of variation is relatively large. The strong effects of $\eps$ corroborates results provided by \cite{rimbaud2018} with a stochastic epidemic model. Since $\beta_h$ and $\beta_v$ have a symmetric role in the model for $I_h(t,x)$ and $I_v(t,x)$ and to avoid to distort the sensitivity analysis with respect to these parameters, their variation ranges have been set up equal (see Table \ref{tabIntervals}). In addition, for facilitating the relative interpretation of the effects of parameters on $I_h(t,x)$ and $I_v(t,x)$, we considered a situation where the densities of hosts and vectors are initially constant and both equal to 300 units across space. 

Figures~\ref{IndiceP-Hosts} and~\ref{IndiceP-Vectors} display PIs of parameters across space and time (up to $t=50$ years) for infected hosts and infected vectors, respectively (Supplementary Figure \ref{IndiceTempsLongs} provide PIs for infected hosts between year 70 and year 100). To complete the interpretation of these figures, we recorded the spread of epidemics along a 40-points transect  (shown by Figure \ref{transect}) going through the study region from the introduction point. Then, simulations have been grouped by class of interval of input parameters. For each parameter, four equal-length classes have been created by dividing the interval of simulation defined in Table \ref{tabIntervals}. Figures~\ref{transectHote} and~\ref{transectVector} give the means of $I_h$ and $I_v$, respectively, for each parameter class, at different dates and along the transect.

Parameters $\beta_v$ and $\beta_h$ broadly play similar roles in the spatio-temporal variability of the number of infected hosts, even if $\beta_h$ is more influential far from the introduction site at the early stage of the epidemics (Figure~\ref{IndiceP-Hosts}). In contrast, $D$ mostly plays a minor role, except after 30-50 years in areas far from the introduction point. This is corroborated by Figure~\ref{transectHote} illustrating the impact of parameter variations on infected-hosts variations along the above-mentioned transect. Thus, using as levers the reduction of $\beta_v$ (e.g., by {\it inciting} vectors to feed on non-host plants via the planting of such plants or the settling of repellents/attractors) or the reduction of $\beta_h$ (e.g., by protecting host plants with insect-proof nets) should broadly have the same significant impact on the number of infected hosts across space. However, attempting to hamper the diffusion of insects is not expected to greatly decrease the number of infected hosts, at least at the early stage of the epidemics.

Non-intuitively, the spatio-temporal variability of the number of infected vectors has not the same drivers than the spatio-temporal variability of the number of infected hosts: the PIs for infected hosts and infected vectors clearly differ after year 10, with a prominent effect of $\beta_v$ in the variability of the number of infected vectors, as shown by Figures~\ref{IndiceP-Vectors} and~\ref{transectVector}. Once the hosts of a given area are infected at a sufficiently large proportion, the variability in $I_v$ mostly depends on the contact rate of a vector with hosts (i.e., $\beta_v$). This situation holds near the introduction points at years 20-30 and over the whole study region at year 50. Thus, on a long term, the decrease of the number of infected vectors requires above all a reduction of $\beta_v$  (e.g., by {\it inciting} vectors to feed on non-host plants).


 \begin{figure}[!ht]
 \begin{center}
\includegraphics[width=14cm]{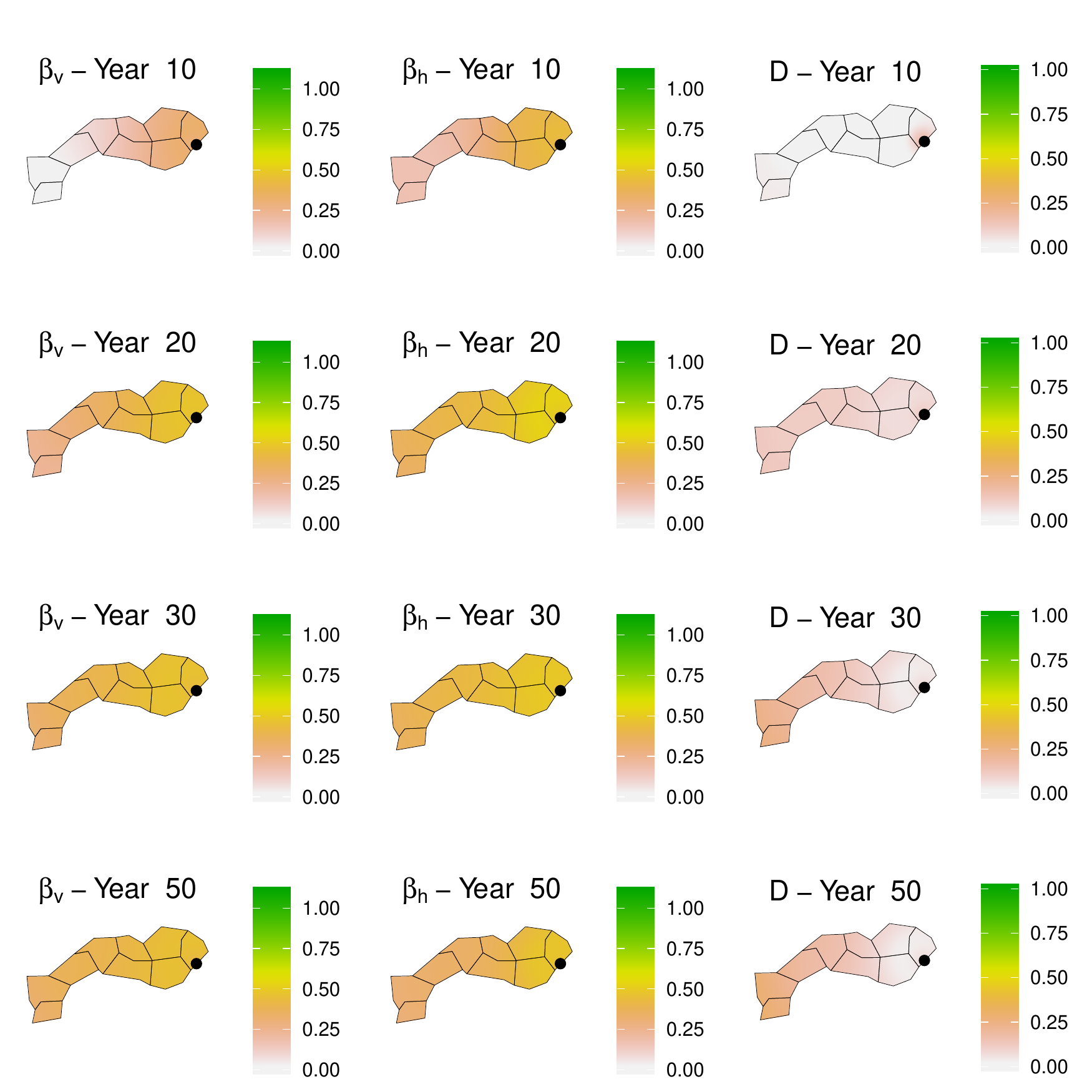}\\
\caption{Maps of the principal sensitivity indices (PIs) for the number of  infected hosts $I_h(t,x)$. From left to right, columns correspond to parameters $\beta_v$, $\beta_h$ and $D$.  From top to bottom, rows correspond to years 10, 20, 30 and 50. Maps were obtained from the 600 PI values scattered in $\Omega$ via the linear interpolation implemented in the \texttt{interp} function of the \texttt{R} package \texttt{akima} \cite{akima}. }
\label{IndiceP-Hosts}
\end{center}
\end{figure}

\begin{figure}[!ht]
\begin{center}
\includegraphics[width=14cm]{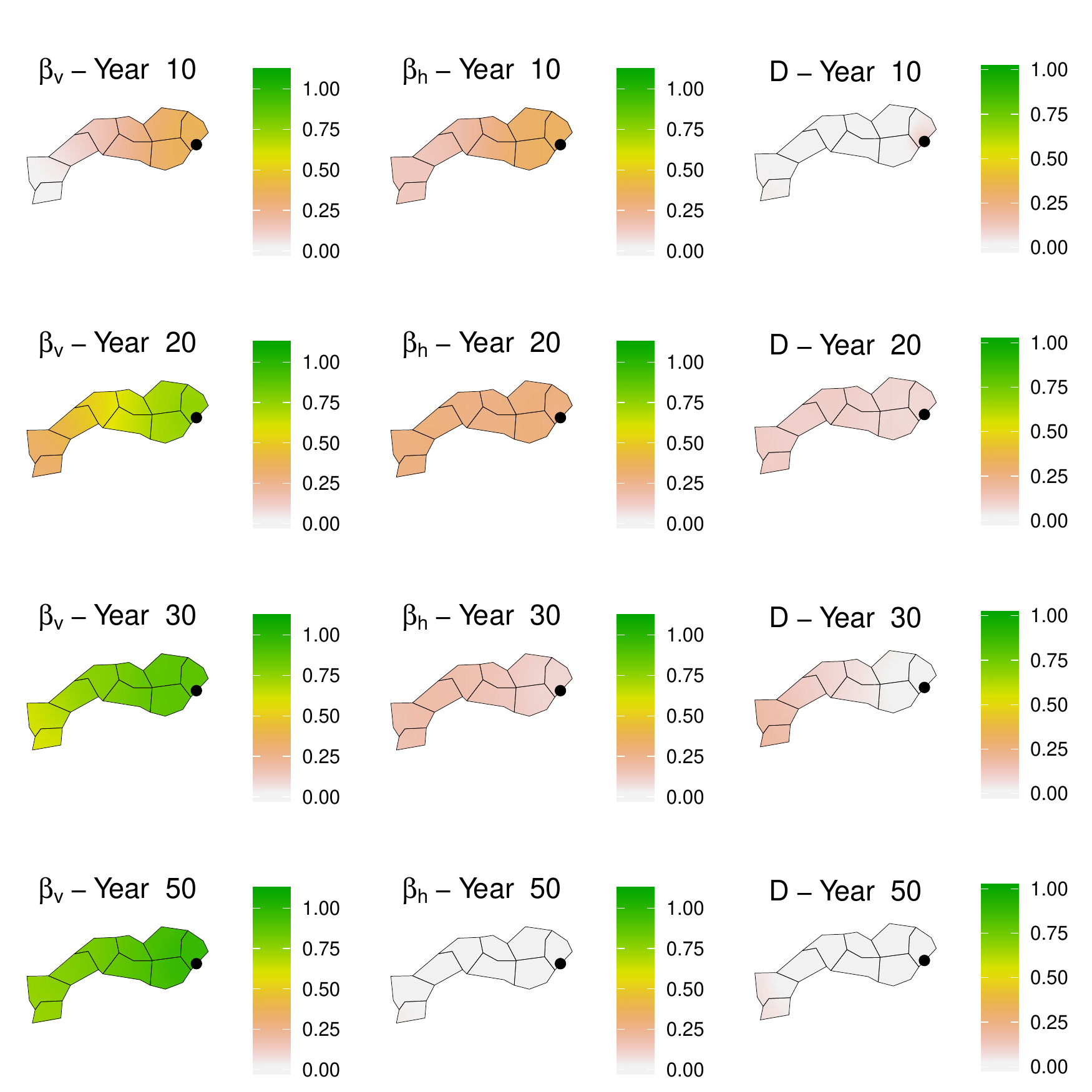}\\
\caption{ Maps of the principal sensitivity indices (PIs) for the number of infected vectors $I_v(t,x)$. From left to right, columns correspond to  parameters $\beta_v$, $\beta_h$ and $D$.  From top to bottom, rows correspond to years 10, 20, 30 and 50. Maps were obtained from the 600 PI values scattered in $\Omega$ via the linear interpolation implemented in the \texttt{interp} function of the \texttt{R} package \texttt{akima} \cite{akima}.}
\label{IndiceP-Vectors}
\end{center}
\end{figure}


\begin{figure}[!ht]
\begin{center}
\includegraphics[width=8cm]{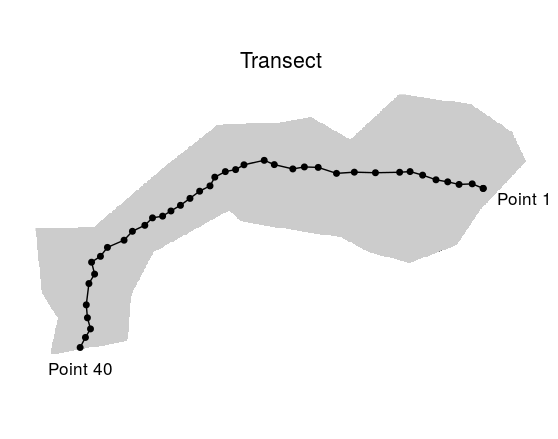}
\caption{Plot of the 40 points transect going through the study region. Points are numbered from 1 (near the site of disease introduction) to 40 (far away from the site of introduction).  }
\label{transect}
\end{center}
\end{figure}

\begin{figure}[!ht]
\begin{center}
\includegraphics[width=14cm]{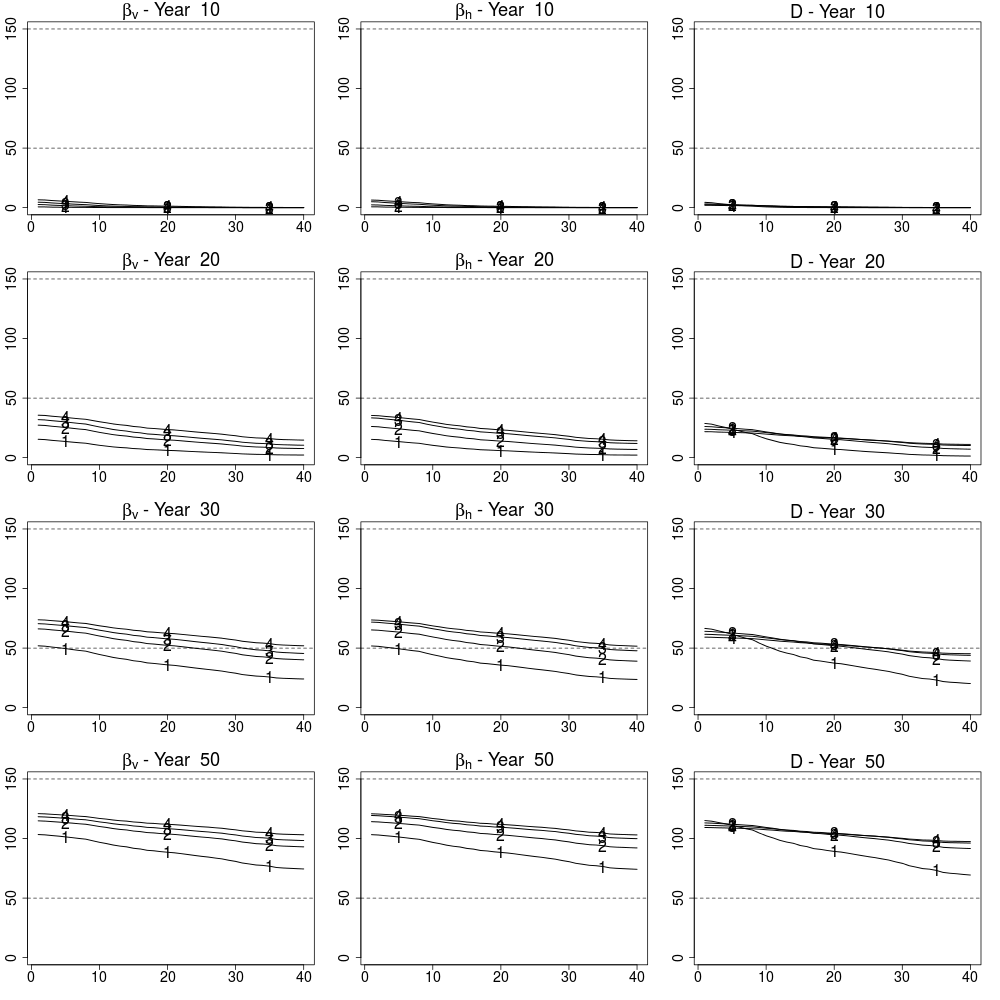}
\caption{Means of $I_h(t,x)$ for 40 points $x$ along the transect shown in Figure~\ref{transect} and for years 10, 20, 30 and 50. Classes are numerated from 1 to 4. Class 1 corresponds to low values of parameters and class 4 to high values. X-axis gives the point number  along the transect and the Y-axis gives the mean of $I_h(t,x)$.   }
\label{transectHote}
\end{center}
\end{figure}

\begin{figure}[!ht]
\begin{center}
\includegraphics[width=14cm]{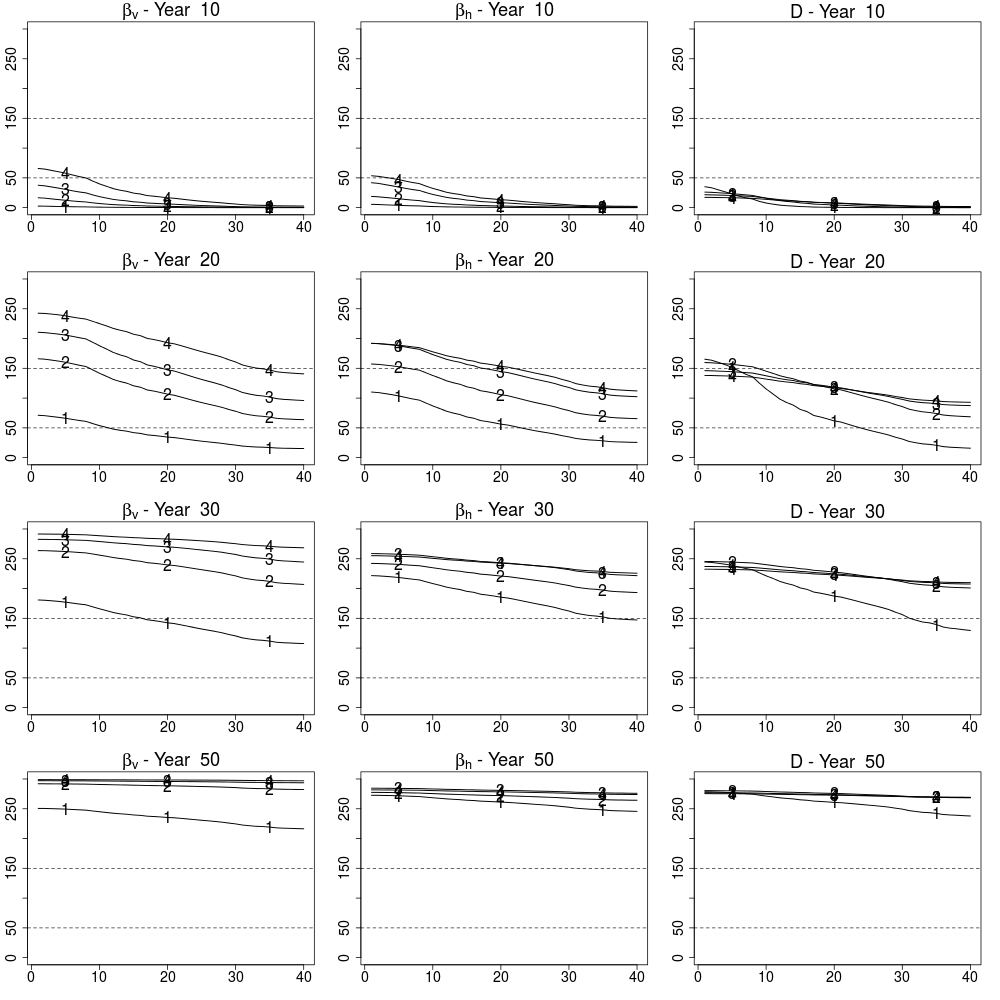}\\
\caption{Means of $I_v(t,x)$ for 40 points $x$ along the transect shown in Figure~\ref{transect} and for years 10, 20, 30 and 50. Classes are numerated from 1 to 4. Class 1 corresponds to low values of parameters and class 4 to high values. X-axis gives the point number along the transect and the Y-axis gives the mean of $I_v(t,x)$.}
\label{transectVector}
\end{center}
\end{figure}

\newpage
\section{Discussion}\label{sec:discu}

We investigated the spatiotemporal dynamics of a vector-borne disease by means of equilibrium and sensitivity analyses of an explicit host-vector, spatiotemporal, compartmental model. The dynamics of vector-borne diseases are relatively complex because of interactions between host and vector compartments as well as interactions between parameters governing fluxes between compartments. This article disentangles a part of this complexity.


In a first approach, we identified theoretical non-negative equilibrium in two contexts: (i) when the vector population is considered as permanent, and (ii) when the vector population has a cyclic annual dynamics consisting of an emergence stage at the beginning of the year, a mortality stage at the end of the year and no adult-to-offspring transmission of the pathogen from one year to the following one.  In both contexts, the non-negative equilibrium implies the infection of the whole host population. We also provided a quantitative bound of convergence to the equilibrium, giving a first estimate on the speed of total contamination.

In a second approach, we considered the model built in context (ii) and explored the impact of parameters related to transmission and diffusion on the transitive spatiotemporal patterns of infected hosts and vectors. We pointed out similar influences of the contact rates `of a host with vectors' and `of a vector with hosts' ($\beta_h$ and $\beta_v$) on the spatiotemporal variability of the number of infected hosts $I_h$. This similarity indicates that one has two levers of action with similar {\it expected efficiency} for slowing down the propagation of the disease in the host population: the lever on $\beta_v$ that can be activated, e.g., by {\it inciting} vectors to feed on non-host plants via the planting of such plants or the settling of repellents/attractors; the lever on $\beta_h$ that can be activated, e.g., by protecting host plants with insect-proof nets. In a case such as the dynamics of {\it Xylella fastidiosa} in southeastern France, it would be interesting in a further study to explicitly incorporate into the model different management measures (on both hosts and vectors) and their individual costs, in the aim of guiding decision makers based on an economic-epidemiological analysis as in \cite{fabre2019,picard2019,rimbaud2019}. 

The numerical and sensitivity analyses also highlighted the trend of vectors to be beyond the front line of the epidemics in the host population. This trend is obvious since hosts are fixed whereas vectors are mobile in our model. However, our model could be used to quantitatively design monitoring strategies of the epidemics targeting the vectors beyond the front line, in the aim of detecting cryptic disease foci on hosts and anticipating the future spread.

The type of model analyses that we carried out is a first step to understand the main factors driving epidemics generated by outbreak models we are interested in. The sensitivity analysis  of the model combined with its analytic study provide some valuable insights on which components significantly affect the epidemics and how they affect them. Such insights may be crucial to point out model components on which epidemiologists should improve knowledge and whose mathematical formalization should be refined to gain in model realism. Such model modifications are inevitably inherent to the pathogenic system considered and may for example lead to the introduction of more accurate  descriptions of some of the demographic processes and environmental effects. Typically, for {\it Xylella fastidiosa}, one could carry out further work using the model proposed here and enriched with recent feedback on the biology of this pathogen (e.g. about its sensitivity to precipitation and temperature  \cite{martinetti2019}).


\

\noindent{\bf Acknowledgements.}
This research was funded by the INRA-DGAL Project 21000679 and the HORIZON 2020 XF-ACTORS Project SFS-09-2016. We thank Claude Bruchou and Olivier Bonnefon, INRAE, BioSP, for discussions concerning the methodology of sensitivity analysis and the numerical resolution of partial differential equations.

\clearpage

\bibliographystyle{plain}
 \bibliography{xyllela.bib}

\begin{thebibliography}{10}

\bibitem{abboud2019}
C.~Abboud, O.~Bonnefon, E.~Parent, and S.~Soubeyrand.
\newblock Dating and localizing an invasion from post-introduction data and a
  coupled reaction--diffusion--absorption model.
\newblock {\em Journal of mathematical biology}, 79:765--789, 2019.

\bibitem{akima}
H.~Akima and A.~Gebhardt.
\newblock {\em akima: Interpolation of Irregularly and Regularly Spaced Data},
  2016.
\newblock R package version 0.6-2.

\bibitem{britton2014}
T.~Britton and F.~Giardina.
\newblock Introduction to statistical inference for infectious diseases.
\newblock {\em Journal de la Soci{\'e}t{\'e} Fran{\c c}aise de Statistique},
  157:53--70, 2014.

\bibitem{lhspackage}
R.~Carnell.
\newblock {\em lhs: Latin Hypercube Samples}, 2019.
\newblock R package version 1.0.1.

\bibitem{denance2017}
N.~Denanc{\'e}, B.~Legendre, M.~Briand, V.~Olivier, C.~Boisseson, F.~Poliakoff,
  and M-A. Jacques.
\newblock Several subspecies and sequence types are associated with the
  emergence of {\it \uppercase{x}ylella fastidiosa} in natural settings in
  {F}rance.
\newblock {\em Plant Pathology}, 66:1054--1064, 2017b.

\bibitem{diekmann2000}
O.~Diekmann and J.~A.~P. Heesterbeek.
\newblock {\em Mathematical epidemiology of infectious diseases: model
  building, analysis and interpretation}, volume~5.
\newblock John Wiley \& Sons, 2000.

\bibitem{Evans1998}
L.~C. Evans.
\newblock {\em Partial differential equations}, volume~19 of {\em Graduate
  Studies in Mathematics}.
\newblock American Mathematical Society, Providence, RI, 1998.

\bibitem{fabre2019}
F.~Fabre, J.~Coville, and N.~J. Cunniffe.
\newblock Optimising reactive disease management using spatially explicit
  models at the landscape scale.
\newblock {\em arXiv preprint arXiv:1911.12131, to appear in Plant Diseases and
  Food Security in the 21st Century}, 2019.

\bibitem{Gilbarg2001}
D.~Gilbarg and N.~S. Trudinger.
\newblock {\em Elliptic partial differential equations of second order}.
\newblock Classics in Mathematics. Springer-Verlag, Berlin, 2001.
\newblock Reprint of the 1998 edition.

\bibitem{godefroid2019}
M.~Godefroid, A.~Cruaud, J.-C. Streito, J-Y. Rasplus, and J-P. Rossi.
\newblock {\it Xylella fastidiosa}: climate suitability of {E}uropean
  continent.
\newblock {\em Scientific Reports}, 9:8844, 2019.

\bibitem{freefem}
F.~Hecht.
\newblock New development in freefem++.
\newblock {\em J. Numer. Math.}, 20(3-4):251--265, 2012.

\bibitem{sensitivitypackage}
B.~Iooss, A.~Janon, and G.~Pujol.
\newblock {\em sensitivity: Global Sensitivity Analysis of Model Outputs},
  2019.
\newblock R package version 1.17.0.

\bibitem{kermack1927}
W.~O. Kermack and A.~G. McKendrick.
\newblock A contribution to the mathematical theory of epidemics.
\newblock {\em Proceedings of the royal society of london. Series A, Containing
  papers of a mathematical and physical character}, 115(772):700--721, 1927.

\bibitem{kyrkou2018}
I.~Kyrkou, T.~Pusa, L.~Ellegaard-Jensen, M.-F. Sagot, and L.~H. Hansen.
\newblock Pierce's disease of grapevines: A review of control strategies and an
  outline of an epidemiological model.
\newblock {\em Frontiers in microbiology}, 9:2141, 2018.

\bibitem{rimbaud2018}
R.~Loup, B.~Claude, S.~Dallot, D.R.J. Pleydell, E.~Jacquot, S.~Soubeyrand, and
  G.~Th{\'e}baud.
\newblock Using sensitivity analysis to identify key factors for the
  propagation of a plant epidemic.
\newblock {\em Royal Society Open Science}, 5(1):171435, 2018.

\bibitem{Mailleret2009}
L.~Mailleret and V.~Lemesle.
\newblock A note on semi-discrete modelling in the life sciences.
\newblock {\em Philosophical Transactions of the Royal Society of London A:
  Mathematical, Physical and Engineering Sciences}, 367(1908):4779--4799, 2009.

\bibitem{martinetti2019}
D.~Martinetti and S.~Soubeyrand.
\newblock Identifying lookouts for epidemio-surveillance: application to the
  emergence of {\it\uppercase{x}ylella fastidiosa} in {F}rance.
\newblock {\em Phytopathology}, 109:265--276, 2019.

\bibitem{Monod2006}
H.~Monod, C.~Naud, and D.~Makowki.
\newblock Uncertainty and sensitivity analysis for crop models.
\newblock In D.~Makowski D.~Wallach and J.~W. Jones, editors, {\em Working with
  Dynamic Crop Models: Evaluation, Analysis parameterization, and
  Applications}, pages 55--99. Elsevier, 2006.

\bibitem{murray2003}
J.~D. Murray.
\newblock Mathematical biology: I. an introduction 2002.
\newblock {\em Mathematical Biology: II. Spatial Models and Biomedical
  Applications}, 2003.

\bibitem{okubo1980}
A.~Okubo.
\newblock {\em Diffusion and ecological problems: mathematical models}.
\newblock Springer-Verlag, New York, NY, USA, 1980.

\bibitem{owen2013}
A.~B. Owen.
\newblock Better estimation of small sobol’ sensitivity indices.
\newblock {\em ACM Trans. Model. Comput. Simul.}, 23(2), 2013.

\bibitem{picard2019}
C.~Picard, S.~Soubeyrand, E.~Jacquot, and G.~Th{\'e}baud.
\newblock Analyzing the influence of landscape aggregation on disease spread to
  improve management strategies.
\newblock {\em Phytopathology}, 109:1198--1207, 2019.

\bibitem{purcell2013}
A.~Purcell.
\newblock Paradigms: Examples from the bacterium xylella fastidiosa.
\newblock {\em Annual Review of Phytopathology}, 51(1):339--356, 2013.

\bibitem{Rsoftware}
{R Core Team}.
\newblock {\em R: A Language and Environment for Statistical Computing}.
\newblock R Foundation for Statistical Computing, Vienna, Austria, 2019.

\bibitem{reiner2013}
R.~C. Reiner~Jr, T.~A. Perkins, C.~M. Barker, T.~Niu, L.~F. Chaves, A.~M.
  Ellis, D.~B. George, A.~Le~Menach, J.R.C. Pulliam, D.~Bisanzio, et~al.
\newblock A systematic review of mathematical models of mosquito-borne pathogen
  transmission: 1970--2010.
\newblock {\em Journal of The Royal Society Interface}, 10:20120921, 2013.

\bibitem{rimbaud2019}
L.~Rimbaud, S.~Dallot, C.~Bruchou, S.~Thoyer, E.~Jacquot, S.~Soubeyrand, and
  G.~Th{\'e}baud.
\newblock Improving management strategies of plant diseases using sequential
  sensitivity analyses.
\newblock {\em Phytopathology}, 109:1184--1197, 2019.

\bibitem{saltelli2000}
A.~Saltelli, K.~Chan, and E.M. Scott.
\newblock {\em Sensitivity Analysis}.
\newblock Wiley Series in Probability and Statistics. Wiley, 2000.

\bibitem{saltelli2008}
A.~Saltelli, M.~Ratto, T.~Andres, F.~Campolongo, J.~Cariboni, D.~Gatelli,
  M.~Saisana, and S.~Tarantola.
\newblock {\em Global sensitivity analysis: the primer}.
\newblock John Wiley \& Sons, 2008.

\bibitem{Shigesada1997}
N.~Shigesada and K.~Kawasaki.
\newblock {\em Biological Invasions: Theory And Practice}, volume~66.
\newblock Oxford University Press, Oxford, 11 1997.

\bibitem{soubeyrand2018}
S.~Soubeyrand, P.~de~Jerphanion, O.~Martin, M.~Saussac, C.~Manceau,
  P.~Hendrikx, and C.~Lannou.
\newblock Inferring pathogen dynamics from temporal count data: the emergence
  of {\it \uppercase{x}ylella fastidiosa} in {F}rance is probably not recent.
\newblock {\em New Phytologist}, 219:824--836, 2018.

\bibitem{Turchin1998}
P.~Turchin.
\newblock {\em Quantitative Analysis of Movement: Measuring and Modeling
  Population Redistribution in Animals and Plants}.
\newblock Sinauer Associates, 1998.

\bibitem{Zeidler1986}
E.~Zeidler.
\newblock {\em Nonlinear functional analysis and its applications. {I}}.
\newblock Springer-Verlag, New York, 1986.
\newblock Fixed-point theorems, Translated from the German by Peter R. Wadsack.

\end{thebibliography}

 \clearpage
\appendix 
 \section{Supplementary Material}
\counterwithin{figure}{section}
\setcounter{figure}{0}

\begin{figure}[!ht]
\begin{center}
\includegraphics[width=12cm]{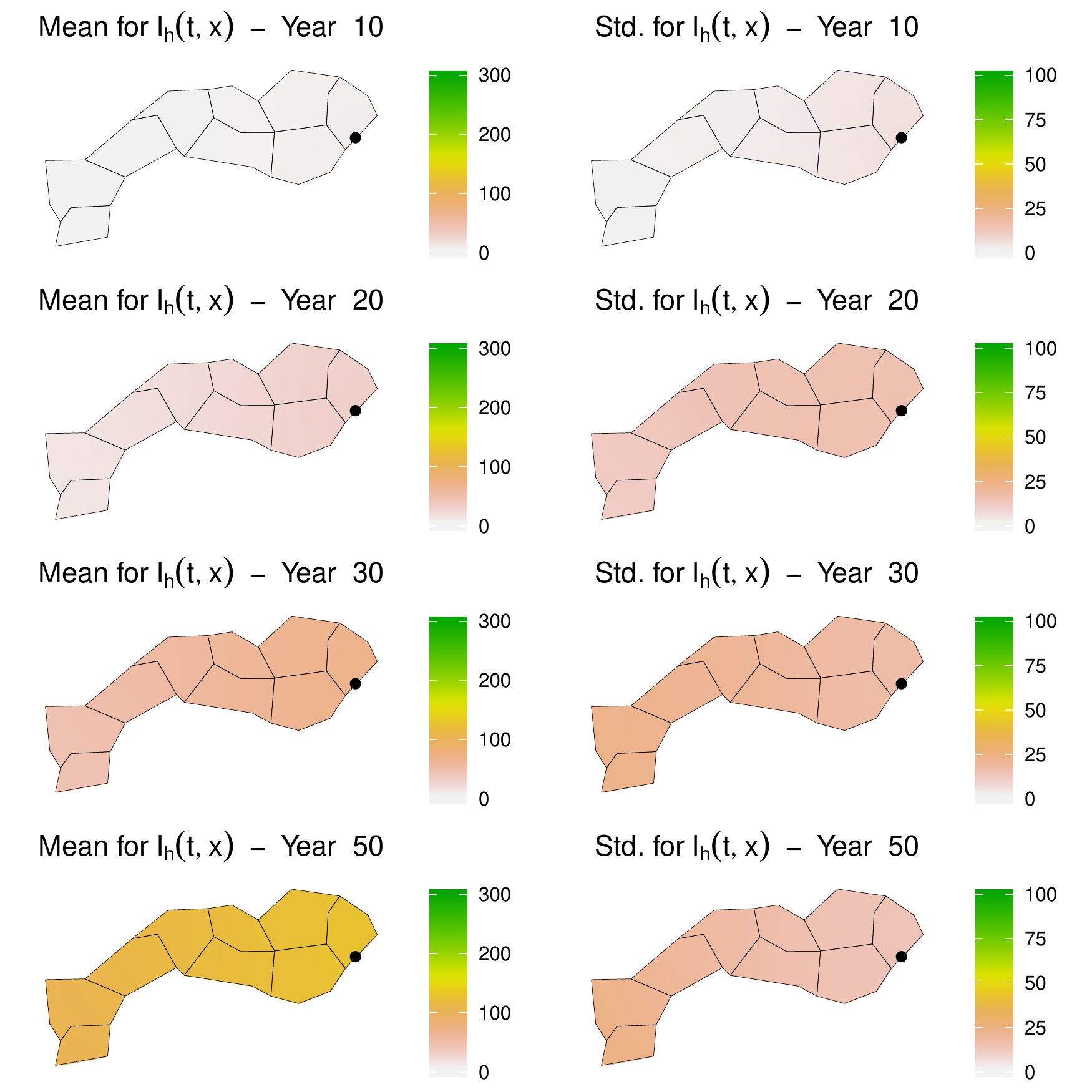}\\
\caption{Mean and standard deviation (Std.) of the number of infected vectors $I_h(t,x)$. The left column gives means at years 10, 20, 30 and 50. The right column gives standard deviations at the same time points. The black point corresponds to the site of introduction of the disease. Maps were obtained from the 600 values scattered in $\Omega$ via the linear interpolation implemented in the \texttt{interp} function of the \texttt{R} package \texttt{akima} \cite{akima}. }
\label{meanvarHosts}
\end{center}
\end{figure}

\begin{figure}[!ht]
\begin{center}
\includegraphics[width=12cm]{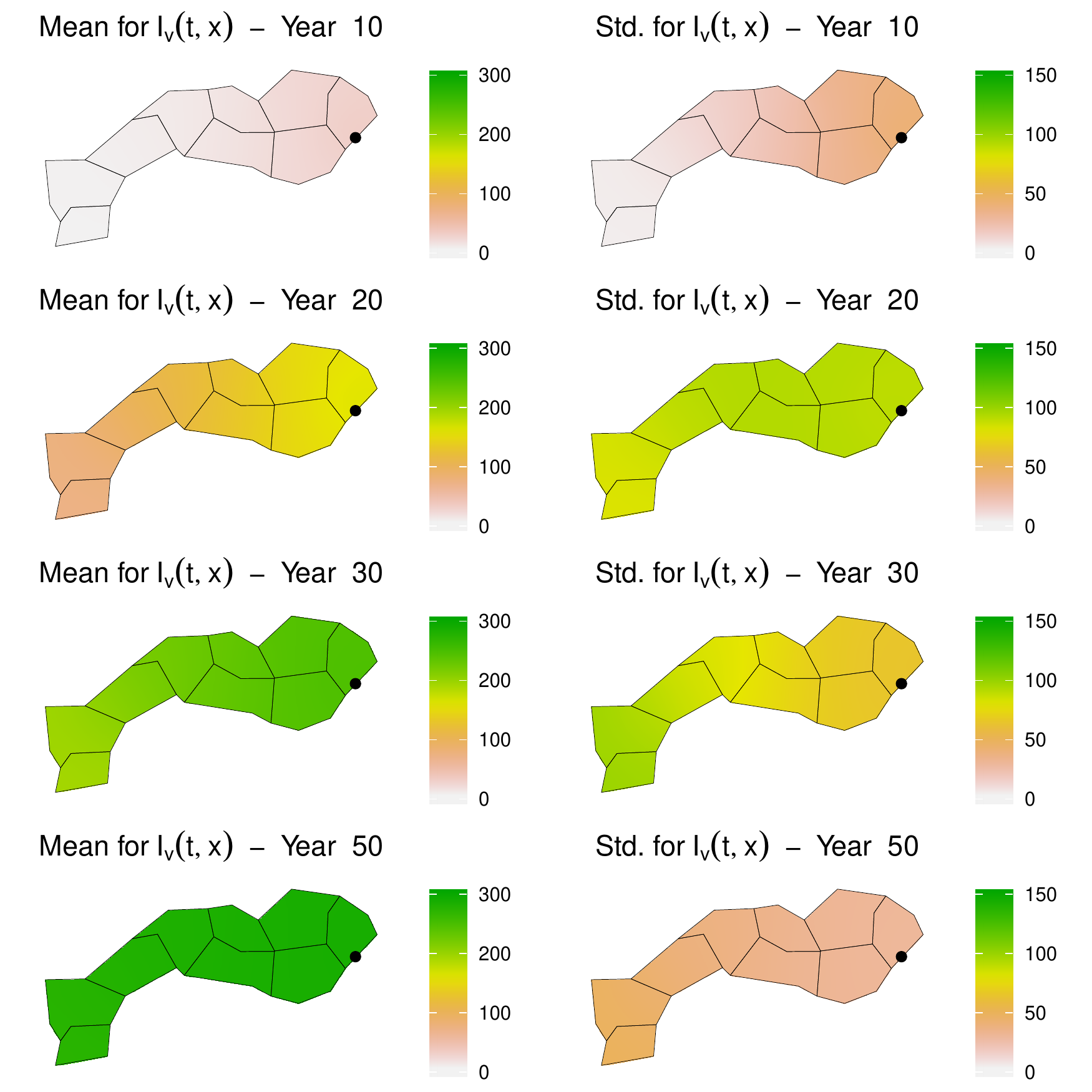}\\
\caption{Mean and standard deviation (Std.) of the number of infected vectors $I_v(t,x)$. The left column gives means at years 10, 20, 30 and 50. The right column gives standard deviations at the same time points. The black point corresponds to the site of introduction of the disease. Maps were obtained from the 600 values scattered in $\Omega$ via the linear interpolation implemented in the \texttt{interp} function of the \texttt{R} package \texttt{akima} \cite{akima}.} 
\label{meanvarVectors}
\end{center}
\end{figure}

\begin{figure}[!ht]
\begin{center}
\includegraphics[width=12cm]{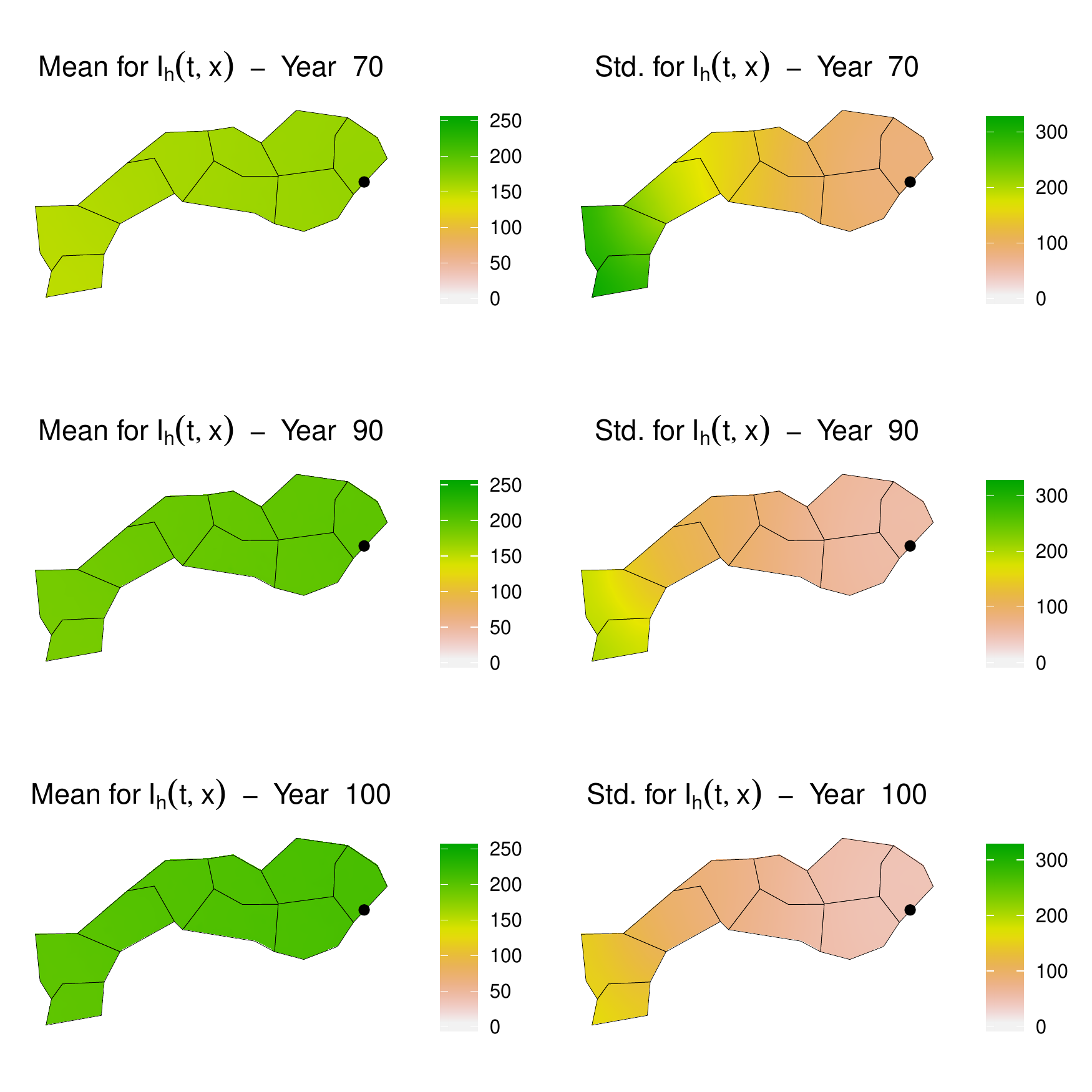}\\
\caption{Mean and standard deviation (Std.) of the number of infected vectors $I_h(t,x)$. The left column gives means at years 70, 90 and 100. The right column gives standard deviations at the same time points. The black point corresponds to the site of introduction of the disease. Maps were obtained from the 600 values scattered in $\Omega$ via the linear interpolation implemented in the \texttt{interp} function of the \texttt{R} package \texttt{akima} \cite{akima}.} 
\label{MeanVarHoststempslongs}
\end{center}
\end{figure}

\begin{figure}[!ht]
\begin{center}
\includegraphics[width=12cm]{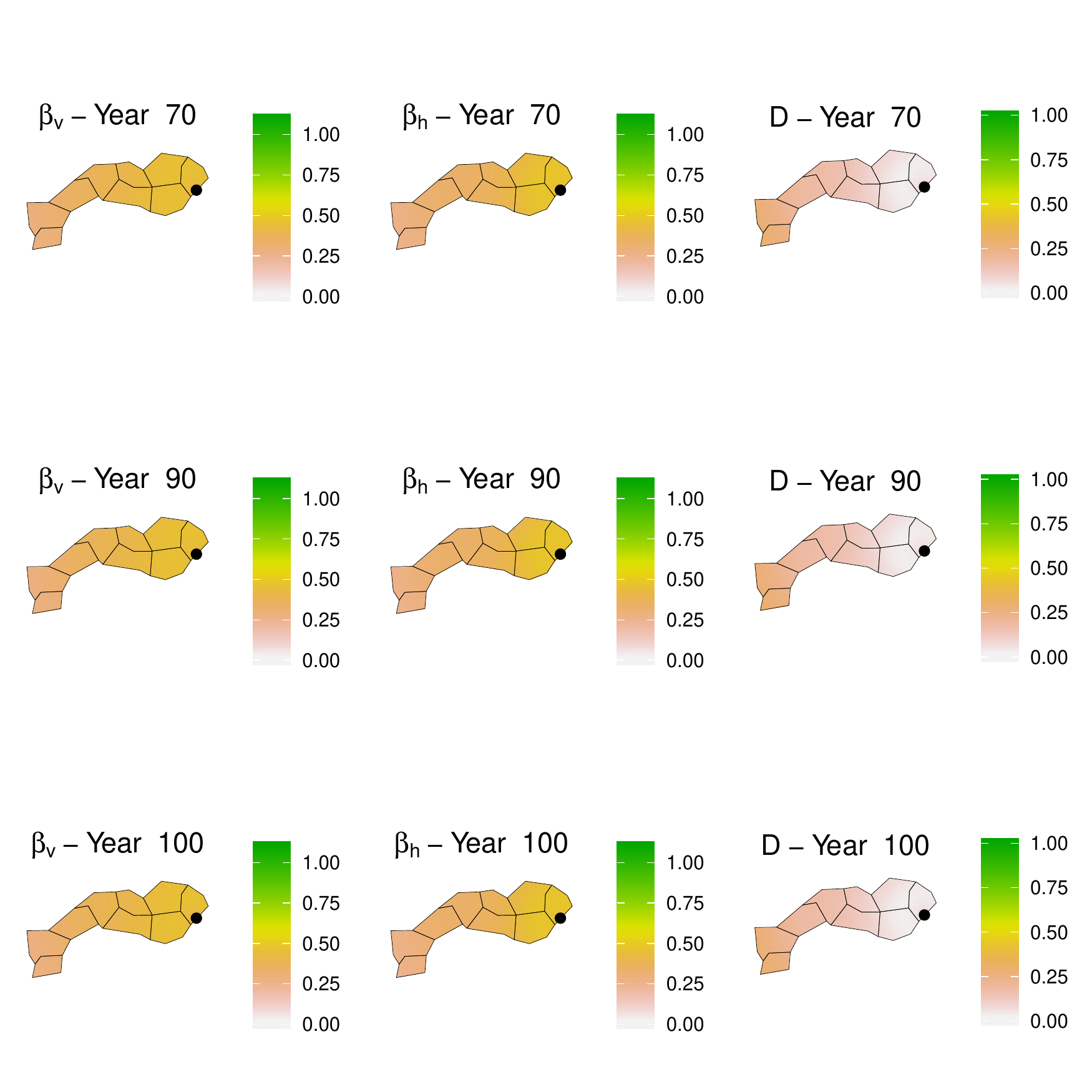}\\
\caption{Maps of the principal sensitivity indices (PIs) for the number of  infected hosts $I_h(t,x)$. From left to right, columns correspond to parameters $\beta_v$, $\beta_h$ and $D$.  From top to bottom, rows correspond to years 70, 90 and 100. Maps were obtained from the 600 values scattered in $\Omega$ via the linear interpolation implemented in the \texttt{interp} function of the \texttt{R} package \texttt{akima} \cite{akima}.} 
\label{IndiceTempsLongs}
\end{center}
\end{figure}

\clearpage

\section{Proofs}
In this appendix we give the proof of Proposition \ref{prop1}.
\begin{proof}
The search of positive equilibria of System $\mathcal{M}_1$ implies to look for positive solutions to the following set of equations:
\begin{align}
&\bar \beta_v(x)S_h^*(x)i_v^*(x)=0\qquad &\text{ for all }  x \in  \O. \label{equi1}\\
&E_h^*(x)=0\qquad &\text{ for all }  x \in  \O.\label{equi2}\\
&D(x) \Delta s_v^*(x) -\beta_h(x)s_v^*(x)I_h^*(x)=0\qquad &\text{ for all }  x \in  \O.\label{equi3}\\
&D(x) \Delta i_v^*(x) +\beta_h(x)s_v^*(x)I_h^*(x)=0\qquad &\text{ for all }  x \in  \O.\label{equi4}\\
&\partial_ns_v^*(x)=\partial_ni^*_v(x)=0 \qquad &\text{ for all }  x \in \partial \O.\label{equi5}
\end{align}
By integrating over the domain $\O$ the  PDE \eqref{equi4} with respect to the measure $d\mu(x)=\frac{dx}{D(x)}$ we then see that 
$$\int_{\O}\beta_h(x)S_v^*(x)I_h^*(x)\,d\mu(x) =0,$$
which  enforces that for almost every $x \in \O$ 
\begin{equation}\label{equi6}
\beta_h(x)S_v^*(x)I_h^*(x)=0
\end{equation}
since $\beta_h,S_v^*$ and $I_h^*$ are non negative quantities.

As a consequence, $i^*_v$ and $s^*_v$ satisfy the following PDE
\begin{align}
&\Delta u(x)=0\qquad &\text{ for almost every }  x \in  \O,\\
&\partial_n u(x)=0 \qquad &\text{ for all }  x \in \partial \O,
\end{align}
which in turn implies that $i_v\equiv C_0\ge 0$ and $s_v\equiv C_1\ge 0$  which  thanks to \eqref{invar2} must satisfy 
\begin{equation}\label{compat}
(C_0+C_1)|\O|_{\mu}=C^*.
\end{equation} 
Going back to Equation  \eqref{equi1}, we see that 
$$C_0\bar \beta_v(x)S_h^*(x)=0 \qquad \text{ for all }  x \in  \O.$$
Since $\bar \beta_v>0$ the later  induces a simple dichotomy with respect to $C_0$:
\begin{itemize}
\item Either $C_0=0$ and then from \eqref{compat}  $D(x)S_v=s_v^*\equiv C_1=\frac{C^*}{|\O|_{\mu}}>0$, 
 which in turn implies that 
$I^*_h=0$ almost everywhere thanks to \eqref{equi6}.
Therefore thanks to \eqref{invar1}, we get $S_h^*(x)=N(x)$ for almost every $x$.
Thus we get $(N(x),0,0,\frac{C^*}{D(x)|\O|_{\mu}},0)$ for the first equilibrium of the system \eqref{eq1}--\eqref{eq5} with boundary condition \eqref{eq-bc}.
\item Or $C_0>0$, then in this situation from \eqref{equi1} we get $S_h^*(x)=0$ almost everywhere and by \eqref{invar1} and  \eqref{equi6} we get  $I^*_h=N(x)$ and $s_v^*\equiv 0$.
Thus  we get $(0,0,N(x),0,\frac{C^*}{D(x)|\O|_{\mu}})$ for the second equilibrium of the system \eqref{eq1}--\eqref{eq5} with boundary condition \eqref{eq-bc}.
\end{itemize}

Let us now check the stability of these positive equilibrium. Note that since $S_h$ and $I_h$ are respectively  decreasing and increasing, we can readily claim that the stationary state $(N(x),0,0,\frac{C^*}{D(x)|\O|_{\mu}},0)$ is unstable. Let us now check the local stability of the endemic state $ES=(0,0,N(x),0,\frac{C^*}{D(x)|\O|_{\mu}}) $. To do so, we linearize the system around $ES$, which gives 
$$
\text{Jacobian}(ES):=\begin{pmatrix}
-\bar \beta_v\frac{C^*}{|\O|_{\mu}} & 0     & 0 & 0 & 0\\
\bar \beta_v\frac{C^*}{|\O|_{\mu}}  & -\eps & 0 & 0 & 0\\
0           & \eps  & 0 &0  & 0\\
0           & 0     & 0 & D(x)\Delta   -\beta_h N(x) & 0   \\
0           & 0     & 0 & +\beta_h N(x) & D(x) \Delta )   
\end{pmatrix}
$$
 and search for the sign of the largest eigenvalue. Observe that up to a change of basis we can rewrite the Jacobian matrix $\text{Jacobian}(ES)$ as follows: 
$$
\text{Jacobian}(ES)=\begin{pmatrix}
-\bar \beta_v\frac{C^*}{|\O|_{\mu}} & 0     & 0 & 0 & 0\\
0           & -\eps & 0 & 0 & 0\\
0           & 0     & 0 & 0 & 0\\
0           & 0     & 0 & D(x)\Delta  -\beta_h N(x) & 0   \\
0           & 0     & 0 & +\beta_h N(x) & D(x) \Delta   
\end{pmatrix}.
$$
Therefore the stability of the endemic state $(ES)$ is then only defined  by the right below block, that is 
$$
\text{Block}(ES)=\begin{pmatrix}
D(x) \Delta -\beta_h N(x) & 0   \\
 \beta_h N(x) &D(x) \Delta 
\end{pmatrix}
$$

Such a block is known to induce a negative spectral bound \cite{Gilbarg2001,Zeidler1986}. Thus, $(ES)$ is a locally stable equilibrium. From the monotone property of $I_h$ and $S_h$ we can also infer that the state $(ES)$ is indeed  globally stable. 
\end{proof}

Below, we establish the proof of Proposition \ref{prop3}.
\begin{proof}
A positive equilibrium $(S^*_h(t,x),E^*(t,x),I_h^*(t,x),S_v^*(t,x),I_v^*(t,x))$ of the impulsive system will then be a positive time periodic  solution of \eqref{eq-imp1}--\eqref{eq-imp5} of period $T$.   As a consequence, from the equation \eqref{eq-imp1}--\eqref{eq-imp3} we deduce  that $I_h^*(t,x)$ and $S_h^*(t,x)$ are respectively  a time increasing and a time decreasing periodic function. Thus $I_h^*(t,x)$ and $S_h^*(t,x)$ must be  independent of time and therefore   
\begin{align}
&\beta_v(x)S_h^*(x)I_v^*(t,x)=0\qquad &\text{ for all }  x \in  \O. \label{equi-imp1}\\
&E_h^*(x)=0\qquad &\text{ for all }  x \in  \O.\label{equi-imp2}\\
&\partial_t S_v^*(t,x) =\Delta (D(x)  S_v^*(t,x)) -\beta_h(x)S_v^*(t,x)I_h^*(x)\qquad &\text{ for all }  x \in  \O.\label{equi-imp3}\\
&\partial_t I_v^*(t,x) =\Delta(D(x) I_v^*(t,x)) +\beta_h(x)S_v^*(t,x)I_h^*(x)\qquad &\text{ for all }  x \in  \O.\label{equi-imp4}\\
&\partial_n(D(x)S_v^*(t,x))=\partial_n(D(x)I^*_v(t,x))=0 \qquad &\text{ for all }  x \in \partial \O.\label{equi-imp5}\\
&(S_v^*(0,x),I^*_v(0,x)=(S_0(x),0) \qquad &\text{ for all }  x \in  \O.\label{equi-imp6}
\end{align}

Note that since $S_0(x)$ is a  non negative function, a straightforward application of the parabolic maximum principle implies that for $t\in (0,T)$, the functions $D(x)S_v^*(t,x)$ and $D(x)I_v^*(t,x)$ are positive for all $x \in \O$ and $t>0$ and  so are $S_v^*(t,x)$ and $I_v^*(t,x)$ since $D(x)>0$.

 Thanks to \eqref{equi-imp1} and \eqref{invar3} this implies that  $S_h^*(x)\equiv 0$ and $I^*_h(x)=N(x)$ which in turns leads to  $S_v^*,I^*_v$ satisfying 
 \begin{align}
&\partial_t S_v^*(t,x) =\Delta (D(x) S_v^*(t,x)) -\beta_h(x)S_v^*(t,x)N(x)\qquad &\text{ for all } 0<t<T,\; x \in  \O\\\
&\partial_t I_v^*(t,x) =\Delta (D(x) I_v^*(t,x)) +\beta_h(x)S_v^*(t,x)N(x)\qquad &\text{ for all }  0<t<T,\;x \in  \O\\
&\partial_nS_v^*(t,x)=\partial_nI^*_v(t,x)=0 \qquad &\text{ for all } 0<t<T,\; x \in \partial \O\\
&(S_v^*(0,x),I^*_v(0,x)=(S_0(x),0) \qquad &\text{ for all }  x \in  \O.
\end{align}
\end{proof}
\end{document}